\documentclass[11pt,a4paper]{article}
\usepackage{amsthm,amsfonts,amsmath}

\topmargin=-24 true mm
\oddsidemargin=-0 true mm
\evensidemargin=-0 true mm
\setlength{\textheight}{264 true mm}  %total+the header
\setlength{\textwidth}{162 true mm}

\newtheorem{Theorem}{Theorem}%[section]
%[section]
%[section]
%[section]
\newtheorem{Lemma}{Lemma}%[section]
\newtheorem{Proposition}{Proposition}%[section]
\newtheorem{Remark}{Remark}%[section]

\def\tr{{\rm\,trace\,}}

\title{Weak quasi contact metric manifolds and new characteristics of~K-contact and Sasakian manifolds}
\author{Vladimir Rovenski}

\begin{document}

\maketitle

\date{ }

\begin{abstract}
Quasi-contact metric manifolds (introduced by Y. Tashiro and then studied by several authors) are a natural extension of contact metric manifolds. Weak almost-contact metric manifolds, i.e., where the~linear complex structure on the contact distribution is replaced by a nonsingular skew-symmetric tensor, have been defined by the author and R. Wolak. In this paper, we study a weak analogue of quasi-contact metric manifolds.
Our main results generalize some well-known theorems and provide new criterions for K-contact and Sasakian manifolds
in terms of conditions on the curvature tensor and other geometric objects associated with the weak quasi-contact metric structure.

\vskip1.5mm\noindent
\textbf{Keywords}:
%Aalmost contact nanifold,
quasi contact metric manifold,
K-contact manifold,
Sasakian manifold.
%weak nearly K\"{a}hler manifold,
%Killing vector field,
%Riemannian curvature tensor.
%, totally geodesic foliation.

\vskip1.5mm\noindent
\textbf{Mathematics Subject Classifications (2010)} 53C15, 53C25, 53D15
\end{abstract}

%\setcounter{secnumdepth}{4}
%%%%%%%%%%%%%%%%%%%%%%%%%%%%%%%%%%%%%%%%%%
%\setcounter{page}{1}

\section{Introduction}
\label{sec:00-ns}

Contact Riemannian geometry plays an important role in both mathematics and
%theoretical
physics.
It~considers a $(2n + 1)$-dimensional smooth manifold $M$ equipped with an \textit{almost-contact metric} (a.c.m.) structure $({f},\xi,\eta,g)$,
where $g$ is a Riemannian metric,
${f}$ is a
%skew-symmetric
$(1,1)$-tensor, $\xi$ is a vector field and $\eta$ is a 1-form
%($\eta(\xi)=1$)~
%\hl
{satisfying} %MDPI: 1. Please recheck all equations and make sure there are no duplicated equations in the whole manuscript. Thanks!
%2. Please carefully check variable formatting (italic, bold, subscript, uppercase, etc.) throughout the manuscript to ensure the formatting is consistent and revise if needed..
\begin{align*}
 & {f}^2 = -{\rm id} +\eta\otimes\xi,\quad \eta(\xi)=1, \\
 & g({f} X,{f} Y)= g(X,Y) -\eta(X)\,\eta(Y),\quad \eta(X)=g(\xi,X),
 %{f}^2 = -{\rm id}_{\,TM} + \eta\otimes \xi, \qquad .
\end{align*}
 where $X,Y\in\mathfrak{X}_M$ and
 $\mathfrak{X}_M$ are the Lie algebra of smooth vector fields on $M$.
% based on $U(n)\times I$ representation theory,
{In}
 \cite{CG-1990}, D.~Chinea and C.~Gonzalez obtained a classification of a.c.m. manifolds
which was analogous to the classification of almost-Hermitian manifolds established by A. Gray and H.M.~Hervella; see~\cite{GH-80}.
Let~$M^{2n+1}({f},\xi,\eta,g)$ be an almost-contact metric manifold.
The tensor ${\cal N}^{\,(1)}$ is defined by
\[
 {\cal N}^{\,(1)} = [{f},{f}] + 2\,d\eta\otimes\,\xi,
\]
where
%the exterior derivative $d\eta$ of $\eta$ is given~by
%$[{f},{f}]$ is the~Nijenhuis torsion of ${f}$ and
%\[
 $2\,d\eta(X,Y) = X(\eta(Y)) - Y(\eta(X)) - \eta([X,Y])
=(\nabla_X\eta)Y-(\nabla_Y\eta)X$,
%\]
and
%$[{f},{f}]$
% is given~by
\begin{equation*}%\label{2.5}
 [{f},{f}](X,Y) = {f}^2 [X,Y] + [{f} X, {f}Y] - {f}[{f}X, Y] - {f}[X, {f}Y] ,
\end{equation*}
is the~Nijenhuis torsion of ${f}$.
%The follo\-wing classes of almost contact metric manifolds correspond to the above distinguished classes of almost Hermitian manifolds:
The (1,1)-tensor $h$ is defined by
\[
 h=(1/2)\,{\cal L}_\xi\,f,
\]
where $({\cal L}_Z\,f)X=[Z, fX] - f[Z,X]$ is the Lie derivative, {as in}  \cite{blair2010riemannian}.
On a contact metric manifold, $h$ vanishes if and only if $\xi$ is a Killing vector field.
A contact metric manifold $M(f,\xi,\eta,g)$ for which $\xi$ is a Killing vector field is called a K-{contact manifold}, as in \cite{blair2010riemannian}.
The following classes of a.c.m. manifolds are well {known:}

%have been studied by many geometers:

\begin{itemize}
\item[(1)]\hskip-1.5pt ${\cal M}$ -- normal a.c.m. manifolds, characterized by the equality ${\cal N}^{\,(1)}=0$.
\item[(2)]\hskip-1.5pt ${\cal S}$ -- Sasakian manifolds,
characterized by the equality; see \cite[Theorem~6.3]{blair2010riemannian},
\begin{equation}\label{E-S1}
 (\nabla_X{f})Y=g(X,Y)\,\xi -\eta(Y)X,\quad X,Y\in\mathfrak{X}_M .
\end{equation}
\item[(3)]\hskip-1.5pt ${\cal C}$ -- contact metric manifolds, characterized by the equality $d\eta{=}\Phi$, where $\Phi({X},{Y}){=}g({X},{f} {Y})$.
\item[(4)]\hskip-1.5pt ${\cal N}$ -- nearly-Sasakian manifolds, characterized by the equality;
%similar to \eqref{E-nS-Sas}
%for the symmetric part of $\nabla{f}$,
see~\cite{blair1976}:
\begin{equation*}%\label{E-nS-02}
 (\nabla_Y{f})Y = g(Y,Y)\,\xi -\eta(Y)Y,\quad Y\in\mathfrak{X}_M,
\end{equation*}
\item[(5)]\hskip-1.5pt ${\cal Q}$ -- quasi-contact metric (q.c.m.) manifolds, characterized by the equality
\begin{equation}\label{E-quasi-01}
 (\nabla_X{f})Y + (\nabla_{fX}{f})fY = 2\,g(X,Y)\,\xi
 -\eta(Y)\big(X + hX +\eta(X)\,\xi\big),\quad X,Y\in\mathfrak{X}_M.
\end{equation}

\end{itemize}

%\hl
{Thus}
%MDPI: We added an indentation, please check.
, ${\cal N}\subset{\cal S}={\cal M}\cap{\cal C}$
and ${\cal C}\subset{\cal Q}$.
%${\cal AK}\cap{\cal NK}={\cal K}={\cal QK}\cap{\cal H}$ and ${\cal %AK},{\cal NK}\subset{\cal QK}$.
%Note that thecontact metric manifolds are q.c.m. manifolds,
For the last inclusion, see
%, for example,
\cite[Lemma~7.3]{blair2010riemannian}; the inverse is true for dimension 3, but for dimensions $>3$ it is an open question; see~\cite{KPSP-2020,CKPS-2016,KPSP-2014}.

%The \textit{almost Hermitian cone} is the manifold $\bar M = %M^{2n+1}\times\mathbb{R}$, equipped with tensors $J$ and $\bar g$
%such that for $X,Y\in TM$ and~$t\in\mathbb{R}$:
%\begin{eqnarray*}
% && J(X,\, 0) = ({f} X ,\, -\eta(X)\partial_t),\quad
% J(0, \partial_t) = (\xi,\, 0),\\
% && \bar g((X,0),(Y,0)) = e^{-2t}g(X,Y),\quad \bar g((X,0),
%(0,\partial_t)) = 0,\quad \bar g((0,\partial_t),(0,\partial_t))
%= e^{-2t}.
%\end{eqnarray*}
%%Hence, $J(X,0)=(fX,0)$ for $X\in\ker f$, $J(\xi,0)=(0,\partial_t)$ %and $J(0,\partial_t)=(-\xi,0)$.

%Then~$\bar M(J, \bar g)$ is an almost Hermitian manifold.
%By means of the almost Hermitian cone, a.c.m. manifolds and almost %Hermitian manifolds correspond to each~other.

%1) $\bar M(J,\bar g)$ belongs to ${\cal H}$ if and only if %$M(f,\xi,\eta,\bar g)$ belongs to ${\cal M}$.

%2) $\bar M(J,\bar g)$ belongs to ${\cal K}$ if and only if %$M(f,\xi,\eta,\bar g)$ belongs to ${\cal S}$.

%3) $\bar M(J,\bar g)$ belongs to ${\cal AK}$ if and only if %$M(f,\xi,\eta,\bar g)$ belongs to ${\cal C}$.

%4) $\bar M(J,\bar g)$ belongs to ${\cal NK}$ if and only if %$M(f,\xi,\eta,\bar g)$ belongs to ${\cal N}$.

%5) $\bar M(J,\bar g)$ belongs to ${\cal QK}$ if and only if %$M(f,\xi,\eta,\bar g)$ belongs to ${\cal Q}$.

\smallskip

By means of the almost-Hermitian cone (see Section~\ref{sec:01-ns}), a.c.m. manifolds and almost-Hermiti\-an manifolds correspond to each~other.

\begin{itemize}
\item[(1)] $M(f,\xi,\eta,\bar g)$ belongs to ${\cal M}$ if and only if $\bar M(J,\bar g)$ belongs to ${\cal H}$ -- Hermitian manifolds, defined by $[{J},{J}]=0$.
\item[(2)] $M(f,\xi,\eta,\bar g)$ belongs to ${\cal S}$ if and only if $\bar M(J,\bar g)$ belongs to ${\cal K}$ -- K\"{a}hler manifolds, defined by $\bar\nabla J=0$, where $\bar\nabla$ is the Levi--Civita connection for~$\bar g$.
\item[(3)] $M(f,\xi,\eta,\bar g)$ belongs to ${\cal C}$ if and only if $\bar M(J,\bar g)$ belongs to ${\cal AK}$ -- almost-K\"{a}hler manifolds, defined by $d\Omega=0$, where $\Omega(X,Y)=\bar g(X,JY)$.
\item[(4)] $M(f,\xi,\eta,\bar g)$ belongs to ${\cal N}$ if and only if $\bar M(J,\bar g)$ belongs to ${\cal NK}$ -- nearly-K\"{a}hler manifolds, defined by $(\bar\nabla_X J)X=0$.
\item[(5)] $M(f,\xi,\eta,\bar g)$ belongs to ${\cal Q}$ if and only if  $\bar M(J,\bar g)$ belongs to ${\cal QK}$ -- quasi-K\"{a}hler manifolds, defined by $(\bar\nabla_X J)Y+(\bar\nabla_{JX}J)JY = 0$.
\end{itemize}

Thus, ${\cal AK}\cap{\cal NK}={\cal K}={\cal QK}\cap{\cal H}$ and ${\cal AK},{\cal NK}\subset{\cal QK}$.
%\subset{\cal SK}$.

In \cite{RP-2,RWo-2,rov-117,rov-122,rov-128,rov-130}, we introduced and studied metric structures on a smooth manifold that generalized the a.c.m.
%lmost contact, Sasakian, etc. metric
structures.
%In~\cite{rov-2023,rov-2023c} we investigated new structures of this type, called {weak nearly Sasakian structure}
%and {weak nearly K\"{a}hler structure}, and asked the question: \textit{under what conditions are weak nearly Sasakian manifolds Sasakian}?
These so-called ``weak'' structures (where the~linear complex structure on the contact distribution is replaced by a nonsingular skew-symmetric tensor)
made it possible to take a new look at the classical theory and find new applications.
%e.g., in Sasakian twistor spinors theory, see E.C.~Kim~\cite{K-2023}.
%O'Brian and Rawnsley \cite{OR} noticed that
%For an oriented Riemannian manifold, its twistor space can be seen as the space of linear complex structures on the tangent space
%that are compatible with the metric $g$ and orientation.
%Some authors, for example, A.C. Herrera in \cite{H-2022}, studied the problem of finding parallel skew-symmetric 2-tensors different from almost-complex %structures on a Riemannian manifold
%and classified such tensors.
% or proved that some spaces do not admit them (e.g.,~\cite{KKN}).
%We~believe that theories like twistor spinors can be naturally extended by considering non-singular skew-symmetric tensors instead of linear %complex~structures.

One may consider classes
%$w{\cal H}$, $w{\cal K}, w{\cal AK}, w{\cal NK}, w{\cal QK}$ and
$w{\cal M}$, $w{\cal S}, w{\cal C}, w{\cal N} and~w{\cal Q}$
of weak structures, defined similarly to the above classes,
${\cal M}$, ${\cal S}, {\cal C}, {\cal N} and {\cal Q}$.
%with ${\cal P}={\rm id}$.
%assuming
%of \eqref{E-nS-2.2}
%$\bar g(JX, JY)= \bar g(X, {\cal P}\,Y)$ instead of the classical definition
%
%Quasi contact metric manifold, in some senses is a generalization of %contact metric manifold \cite{KPSP-2014}, and
Our previous works \cite{RP-2,RWo-2,rov-117,rov-122,rov-128,rov-130} are devoted to the classes $w{\cal M}$, $w{\cal S}, w{\cal C} and w{\cal N}$; see survey \cite{rov-survey24}.
This paper continues our study of the geometry of weak a.c.m. manifolds and discusses how the above weak a.c.m. structures relate to each other.
The above open question motivates us to study class $w{\cal Q}$ of
%{\hl
{weak q.c.m. manifolds}.
%MDPI:  %MDPI: Please confirm if the italics is unnecessary and can be removed. The following highlights are the same.
Our overall goal is to demonstrate that weak a.c.m. structures allow us to look at the theory of contact metric manifolds in a new~way.
To do this, we successfully extended classical theorems from contact geometry to the more general setting of weak q.c.m. manifolds
$w{\cal Q}$.

%Our main result is the following
%\begin{Theorem}\label{T-01}
%$\phantom.$
%1) $\bar M(J,{\cal P},\bar g)$ belongs to $w{\cal H}$ if and only if $M(f,Q,\xi,\eta,\bar g)$ belongs to $w{\cal M}$.
%2) $\bar M(J,{\cal P},\bar g)$ belongs to $w{\cal K}$ if and only if $M(f,Q,\xi,\eta,\bar g)$ belongs to $w{\cal S}$.
%3) $\bar M(J,{\cal P},\bar g)$ belongs to $w{\cal AK}$ if and only if $M(f,Q,\xi,\eta,\bar g)$ belongs to $w{\cal C}$.
%4) $\bar M(J,{\cal P},\bar g)$ belongs to $w{\cal NK}$ if and only if $M(f,Q,\xi,\eta,\bar g)$ belongs to $w{\cal N}$.
%5) $\bar M(J,{\cal P},\bar g)$ belongs to $w{\cal QK}$ if and only if $M(f,Q,\xi,\eta,\bar g)$ belongs to $w{\cal Q}$.
%\end{Theorem}

This~paper is organized as follows.
In~Section~\ref{sec:01-ns}, following the introductory Section~\ref{sec:00-ns}, we review the basics of weak
%almost Hermitian and weak
a.c.m. mani\-folds and prove Lemma~\ref{P-2.6}.
Section~\ref{sec:main} contains our main contributions -- Propositions~\ref{P-7.1}, \ref{P-2.3}, \ref{P-06}, \ref{P-05}
and five theorems -- where we generalize some well-known results and provide new criterions for K-contact manifolds (\mbox{Theorems~\ref{T-05}, \ref{T-04} and \ref{T-06}})
% and \ref{T-09})
and Sasakian manifolds (Theorems~\ref{T-07} and \ref{T-08}).

%Our~proofs use the properties of new tensors, as well as classical constructions.

%in terms of conditions on the curvature tensor and other geometric %objects associated with the weak quasi contact metric structure

\section{Preliminaries}
\label{sec:01-ns}

%A \textit{weak almost Hermitian structure} $(J, {\cal P}, \bar g)$ %is defined by nonsingular $(1,1)$-tensors
%$J,{\cal P}$ and a Riemannian metric $\bar g$
%on a smooth even-dimensional manifold $\bar M$ such that
%\begin{align*}%\label{E-nS-J}
% J^{\,2} = -{\cal P},\quad \bar g(JX,JY)=\bar g(X,{\cal P}Y),\quad %X,Y\in\mathfrak{X}_{\bar M}.
%\end{align*}
%Then $J$ is skew-symmetric,
%${\cal P}$ is self-adjoint and positive definite,
%and $[{\cal P},J]:={\cal P}J - J\,{\cal P}=0$.

%\smallskip

A {weak a.c.m. structure} on a smooth
%Riemannian
manifold $M^{\,2n+1}$
%\ (n\ge1)$
%is a set $({f},Q,\xi,\eta, g)$,
%where
is defined by a $(1,1)$-tensor ${f}$, a nonsingular $(1,1)$-tensor $Q$, a vector field $\xi$,
% (called Reeb vector field),
a 1-form $\eta$, and a Riemannian metric $g$
% on $M$,
 such~that
%\begin{equation}\label{E-nS-2.1}
% {f}^2 = -Q + \eta\otimes \xi,\quad \eta(\xi)=1,\quad Q\,\xi=\xi .
%\end{equation}
%see \cite{RP-2,RWo-2}.
% If there is a Riemannian metric $g$ on $M^{\,2n+1}({f},Q,\xi,\eta)$ such that
\begin{align}\label{E-nS-2.2}
\nonumber
& {f}^2 = -Q +\eta\otimes\xi,\quad \eta(\xi)=1, \\
& g({f} X,{f} Y)= g(X,Q\,Y) -\eta(X)\,\eta(Y),\quad \eta(X)=g(\xi, X)\quad (X,Y\in\mathfrak{X}_M).
\end{align}
%then $({f},Q,\xi,\eta,g)$ is called a \textit{weak almost contact metric structure}.
% on $M$.
%
%A weak almost contact manifold endowed with a compatible Riemannian metric $g$ is called
%a \textit{weak almost contact metric manifold} and is denoted by $M^{\,2n+1}({f},Q,\xi,\eta,g)$.
%
%Assume that a $2n$-dimensional distribution $\ker\eta$ is ${f}$-invariant, i.e., ${f}(\ker\eta)\subset\ker\eta$,
%as in the classical theory \cite{blair2010riemannian}, where $Q={\rm id}_{\,TM}$.
%A ``small" (1,1)-tensor $\widetilde Q= Q - {\rm id}_{\,TM}$ is a measure
%of the difference between a weak almost contact structure and an almost contact one.
%By
%this and
%\eqref{E-nS-2.1},
%$\ker\eta$ is $Q$-invariant and
 {The} following~equalities are true for a.c.m. manifolds; see \cite[Proposition~1(a)]{RWo-2}:
\begin{align}\label{E-wquasi-1}
  {f}\,\xi=0,\quad \eta\circ{f}=0,\quad \eta\circ Q=\eta,\quad
  [Q, {f}]
 %:={Q}\circ{f} - {f}\circ{Q}
 =
 %0, \quad
  [\widetilde{Q},{f}]
 %:=\widetilde{Q}\circ{f} - {f}\circ\widetilde{Q}
 =0,\quad
 \eta\circ\widetilde Q=0,\quad \widetilde{Q}\,\xi=0,
\end{align}
where $\widetilde Q= Q - {\rm id}_{\,TM}$ is a ``small''
%(1,1)-
tensor. According to the above, ${f}(\ker\eta)\subset\ker\eta$ and ${\rm rank}\,{f}=2\,n$.
%and the distribution $\ker\eta$ is ${f}$-invariant, i.e., ${f}(\ker\eta)\subset\ker\eta$.
%$\eta\circ{f}=0$,
%For a weak almost contact metric structure,
In~this case, ${f}$ is skew-symmetric, and $Q$ is self-adjoint and positive definite.

%A weak almost contact structure $({f},Q,\xi,\eta)$ on a manifold $M$ will be called {\it normal} if
%the following tensor ${N}^{\,(1)}$ is identically zero:
%\begin{align*}
% {N}^{\,(1)}(X,Y) = [{f},{f}](X,Y) + 2\,d\eta(X,Y)\,\xi,\quad X,Y\in\mathfrak{X}_M.
%\end{align*}
%The Nijenhuis torsion $[{f},{f}]$ of ${f}$ and the exterior derivative $d\eta$ of $\eta$ are given~by (for example,~\cite{blair2010riemannian})
%\begin{align*}
% & [{f},{f}](X,Y) = {f}^2 [X,Y] + [{f} X, {f} Y] - {f}[{f} X,Y] - {f}[X,{f} Y], \\
% & d\eta(X,Y) = \frac12\,\{X(\eta(Y)) - Y(\eta(X)) - \eta([X,Y])\}.
%\end{align*}
%%%%%%%%%%%%%%%%%%%%%%%%%%
%Setting $Y=\xi$ in \eqref{E-nS-2.2}, we get
%and $\eta(X)=g(\xi, X)$.
%, as in the classical theory.
%By~\eqref{E-nS-2.2},
%\[
% $g(X,Q\,X)=g({f} X,{f} X)>0$
%\]
%is true for any nonzero vector $X\in\ker\eta$; thus,
%the tensor $Q$ is positive definite.
%A~1-form $\eta$ on a manifold $M^{2n+1}$ is called a {contact form} if $\eta\wedge(d\eta)^n\ne0$ on $M^{2n+1}$.
%where the {fundamental $2$-form} $\Phi$ is defined by $\Phi({X},{Y})=g({X},{f} {Y}),\ {X},{Y}\in\mathfrak{X}_M$.
A~weak a.c.m. structure satisfying \eqref{E-quasi-01} is called a {weak q.c.m. structure}.
A~weak a.c.m. structure satisfying $d\eta = \Phi$ is called a {weak contact metric structure}.
%\[
% d\eta(X,Y)=g(X,{f} Y),\quad X,Y\in\mathfrak{X}_M .
%\]
%(for example,~\cite{blair2010riemannian}).
%\begin{align*}
% d\eta(X,Y) = \frac12\,\{X(\eta(Y)) - Y(\eta(X)) - \eta([X,Y])\}.
%\end{align*}
For a weak contact metric structure,
% $({f},Q,\xi,\eta,g)$,
the 1-form $\eta$ is contact; see \cite{rov-128}.
A~1-form $\eta$ on
%a manifold
$M^{\,2n+1}$ is said to be \textit{contact} if $\eta\wedge (d\eta)^n\ne0$, e.g.,~\cite{blair2010riemannian}.
%The Nijenhuis torsion of ${f}$ and the exterior derivative
%$d\eta$
%of
%$\eta$ and
%$\Phi$ are given~by
A~weak almost-contact structure is said to be {normal} if
%the tensor
${\cal N}^{\,(1)}=0$.
A~normal weak contact metric structure is called a {weak Sasakian structure}.
Recall that a weak a.c.m. structure is weak Sasakian if and only if it is a Sasakian structure; see \cite[Theorem~3]{RP-2}.

The following tensors on a.c.m. manifolds are well known; see~\cite{blair2010riemannian}:
\begin{eqnarray*}
%\label{2.7X}
 && {\cal N}^{\,(2)}(X,Y) = (\pounds_{{f} X}\,\eta)(Y) - (\pounds_{{f} Y}\,\eta)(X)
% \overset{\eqref{3.3C}}
 = 2\,d\eta({f} X,Y) - 2\,d\eta({f} Y,X),  \\
%\label{2.8X}
 && {\cal N}^{\,(3)}(X) = (\pounds_{\xi}{f})X
% \overset{\eqref{3.3B}}
 = [\xi, {f} X] - {f} [\xi, X],\\
%\label{2.9X}
 && {\cal N}^{\,(4)}(X) = (\pounds_{\xi}\,\eta)(X)
% \overset{\eqref{3.3C}}
 = \xi(\eta(X)) - \eta([\xi, X]) = 2\,d\eta(\xi, X).
\end{eqnarray*}
%qqq
%We define the classes of weak structures $w{\cal K}, w{\cal AK}, w{\cal NK}, w{\cal QK}$ and $w{\cal H}$ corresponding to the above classes, assuming
%%of \eqref{E-nS-2.2}
%$\bar g(JX, JY)= \bar g(X, {\cal P}\,Y)$ instead of the classical definition with ${\cal P}={\rm id}$.
%%%%
 {For} a weak a.c.m. structure,
the following equality is true; see \cite{KPSP-2014} for $Q={\rm id}$:
\begin{align}\label{Eq-2.9}
 {\cal N}^{(2)}(X,Y) =
 (\nabla_{fX}\,\eta)(Y) - (\nabla_{Y}\,\eta)(fX)
 -(\nabla_{fY}\,\eta)(X) + (\nabla_{X}\,\eta)(fY).
\end{align}
 {Let} $M^{2n+1}({f},Q,\xi,\eta,g)$ be a weak a.c.m. manifold.
%Consider ,
%where $\mathbb{R}^p$ is a Euclidean space with an orthonormal basis $\partial_1,\ldots,\partial_p$,
%and
Define (1,1)-tensors $J$, ${\cal P}$ and a Riemannian metric
$\bar g$ on
the product
%manifold
$\bar M = M^{2n+1}\times\mathbb{R}$
for $X,Y\in TM$ and $t\in\mathbb{R}$:
\begin{eqnarray*}
 && J(X,\, 0) = (fX ,\,-\eta(X)\partial_t),\ \
 J(0,\, \partial_t) = (\xi,\, 0),\ \
 {\cal P}(X,\, 0) = (QX,\, 0),\ \
 {\cal P}(0,\, \partial_t) = (0,\, \partial_t). \\
 && \bar g((X,0),(Y,0))=e^{-2t}g(X,Y),\quad \bar g((X,0), (0,\partial_t))=0,\quad \bar g((0,\partial_t),(0,\partial_t))=e^{-2t}.
\end{eqnarray*}
%Hence, $J(Z,0)=(fZ,0)$, ${\cal P}(Z,0)=(QZ,0)$ for $Z\in\ker f$,
%$J(\xi,0)=(0,\partial_t)$, ${\cal P}(\xi,0)=(\xi,0)$
%and
%$J(0,\partial_t)=(-\xi,0)$, ${\cal P}(0,\partial_t)=(0,\partial_t)$.
 {Then,} $J^{\,2}=-{\cal P}$ will hold and $\bar M(J, {\cal P}, \bar g)$ will be a weak almost-Hermitian manifold.
The tensors ${\cal N}^{\,(i)}\ (i=1,2,3,4)$ appear when we use
the integrability condition $[J, J]=0$ of $J$ to express the normality condition ${\cal N}^{\,(1)}=0$ of the weak a.c.m. structure;
see \cite{rov-117}.
%For a metric $g$ on $M$ compatible with $({f},Q,\xi,\eta)$, define a metric on $\bar M$ (for $X,Y\in TM$ and $t\in\mathbb{R}$) by
%\[
% \bar g(X, Y) = e^{-2t}g(X,Y),\quad
% \bar g(X, \partial_t) = 0,\quad
% \bar g(\partial_t, \partial_t) = e^{-2t}.
%\]
%Recall the following.
%\begin{Proposition}[see \cite{rov-117}]
%\label{thm6.1}
%{\rm (a)} For a weak a.c.m. manifold $M(f,Q,\xi,\eta,g)$,
%the vanishing of ${\cal N}^{\,(1)}$ implies that
%${\cal N}^{\,(3)}={\cal N}^{\,(4)}=0$ and
%\begin{align*}
%\label{3.1KK}
% ${\cal N}^{\,(2)}({X},{Y}) =\eta([\tilde Q {X},\,f{Y}])$.
% +g(\tilde Q\,\xi,\xi)\,\eta([\phi {U},{V}]) .
%\end{align*}
%{\rm (b)}
For weak contact metric manifolds, we have ${\cal N}^{\,(2)}={\cal N}^{\,(4)}=0$ and
the trajectories of $\xi$ are geodesics, i.e., $\nabla_\xi\,\xi=0$; moreover,
${\cal N}^{\,(3)}=0$ if and only if $\,\xi$ is a Killing vector field; see~\cite{rov-117}.
%\end{Proposition}

%\begin{Lemma}
%On a manifold with a quasi contact metric structure $h$ is a
%symmetric operator.
%\end{Lemma}

%The article
%We study when the above weak (Hermitian and contact) structures correspond to each~other.

%\section{Proof of Theorem~\ref{T-01}}
%5).

%\begin{Theorem}
%A contact metric manifold is characterized as a weak almost contact metric manifolds
%with self-adjoint tensor $h$ satisfying the condition \eqref{E-quasi-01}.
%\end{Theorem}

The following result generalizes Proposition~2.6 in \cite{KPSP-2014}.

\begin{Lemma}\label{P-2.6}
For a weak q.c.m. structure $(f,Q,\xi,\eta,g)$,
%satisfy \eqref{E-quasi-01}.
%\begin{align}\label{E-quasi-01}
% (\nabla_X f)Y + (\nabla_{fX} f)fY = 2\,g(X,Y)\,\xi -\eta(Y)X -%\eta(X)\eta(Y)\,\xi - \eta(Y) hX.
%\end{align}
%Then
the following equalities are true:
\begin{align}
\label{E-C2}
 & (\nabla_X\,\eta)(QY) + (\nabla_{fX}\,\eta)(fY) + 2\,g(fX,Y) = 0,\\
 \label{E-C3}
 & \nabla_\xi\,f = 0,\\
 \label{E-C4}
  & \nabla_\xi\,\xi = 0,\quad \nabla_\xi\,\eta = 0,\\
 \label{E-Qnabla}
 & Q\,\nabla\,\xi = (\nabla\,\xi)Q = - f - fh, \\
\label{E-nS-xi}
& \pounds_{\xi}\,{Q} = \nabla_{\xi}\,{Q} = 0, \\
% \quad (Y\in\ker\eta),
 \label{E-31A}
 & hf + f h =
 %-(1/2)\,\pounds_{\xi}{\widetilde Q}
  0 ,\\
\label{E-31B}
 & h\,Q - Q\,h = 0.
 %(1/2)\,[\pounds_{\xi}{\widetilde Q}, f].
 %- f\cdot\pounds_{\xi}{\widetilde Q}] .
 \end{align}
\end{Lemma}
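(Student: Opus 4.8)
The plan is to derive all seven equalities as consequences of the defining identity \eqref{E-quasi-01} together with the algebraic relations \eqref{E-nS-2.2} and \eqref{E-wquasi-1}, proceeding in the order in which the identities feed into one another. First I would substitute $Y = \xi$ into \eqref{E-quasi-01}. Since $f\xi = 0$ and $\nabla_{fX} f$ applied to $f\xi = 0$ needs handling via $(\nabla_{fX} f)\xi = \nabla_{fX}(f\xi) - f\nabla_{fX}\xi = -f\nabla_{fX}\xi$, and $(\nabla_X f)\xi = -f\nabla_X\xi$, the right-hand side becomes $2g(X,\xi)\xi - (X + hX + \eta(X)\xi) = 2\eta(X)\xi - X - hX - \eta(X)\xi = \eta(X)\xi - X - hX$. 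This gives a first relation $-f\nabla_X\xi - f\nabla_{fX}\xi = \eta(X)\xi - X - hX$; projecting onto $\ker\eta$ and using $\eta\circ f = 0$ plus $\eta(\nabla_X\xi) = 0$ (from differentiating $\eta(\xi) = 1$) should already yield \eqref{E-Qnabla} after applying $f$ once more and using $f^2 = -Q + \eta\otimes\xi$ together with $f^2 h = -Qh + \eta\otimes\xi$ composed appropriately; the $hX$ term is precisely what produces the $-f - fh$ on the right of \eqref{E-Qnabla}. Next, setting $X = \xi$ in \eqref{E-quasi-01} and using $f\xi = 0$ kills the second term on the left, leaving $(\nabla_\xi f)Y = 2\eta(Y)\xi \cdot g(\xi,\xi)\cdots$ — more carefully, the RHS becomes $2g(\xi,Y)\xi - \eta(Y)(\xi + h\xi + \xi) = 2\eta(Y)\xi - \eta(Y)(2\xi + h\xi)$, and since $h\xi = \frac12(\pounds_\xi f)\xi = \frac12([\xi, f\xi] - f[\xi,\xi]) = 0$, the RHS vanishes, giving \eqref{E-C3}: $\nabla_\xi f = 0$.

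From $\nabla_\xi f = 0$ I would extract $\nabla_\xi\xi$ and $\nabla_\xi\eta$ for \eqref{E-C4}. Differentiating $f\xi = 0$ along $\xi$ gives $(\nabla_\xi f)\xi + f\nabla_\xi\xi = 0$, hence $f\nabla_\xi\xi = 0$; since $\ker f$ is spanned by $\xi$ and $\eta(\nabla_\xi\xi) = \frac12\xi(\eta(\xi)) = 0$, we get $\nabla_\xi\xi = 0$, and then $\nabla_\xi\eta = 0$ follows by metric compatibility: $(\nabla_\xi\eta)(X) = \xi g(\xi,X) - g(\xi,\nabla_\xi X) = g(\nabla_\xi\xi, X) = 0$. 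For \eqref{E-nS-xi}, differentiate $f^2 = -Q + \eta\otimes\xi$ along $\xi$: $(\nabla_\xi f)f + f(\nabla_\xi f) = -\nabla_\xi Q + (\nabla_\xi\eta)\otimes\xi + \eta\otimes\nabla_\xi\xi$, and the left side is zero by \eqref{E-C3} while the last two terms on the right vanish by \eqref{E-C4}, so $\nabla_\xi Q = 0$; since $\nabla_\xi\xi = 0$ the Lie derivative $\pounds_\xi Q$ agrees with $\nabla_\xi Q$ (both tensors, and the difference involves $\nabla\xi$ contracted appropriately — one checks $(\pounds_\xi Q)X = \nabla_\xi(QX) - Q\nabla_\xi X - [\text{correction}]$; the cleanest route is to note $\pounds_\xi Q = \nabla_\xi Q$ holds whenever $\nabla\xi$ is $f$-antisymmetric in the relevant sense, but more directly I will just compute $(\pounds_\xi Q)X = [\xi, QX] - Q[\xi, X]$ and compare). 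Equation \eqref{E-C2} I would obtain by feeding \eqref{E-Qnabla} into the identity $(\nabla_X\eta)(Z) = g(\nabla_X\xi, Z)$: then $(\nabla_X\eta)(QY) + (\nabla_{fX}\eta)(fY) = g(\nabla_X\xi, QY) + g(\nabla_{fX}\xi, fY) = g(Q\nabla_X\xi, Y) + g(\nabla_{fX}\xi, fY)$, and using $Q\nabla_X\xi = -fX - fhX$ and $g(\nabla_{fX}\xi, fY) = -g(f\nabla_{fX}\xi, Y) = \cdots$ with $\nabla_{fX}\xi$ expressed via $(\nabla\xi)Q$ applied suitably; the $-2g(fX,Y)$ should emerge once the $h$-terms cancel using skew-symmetry of $f$ and the relation between $h$ and $\nabla\xi$.

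Finally \eqref{E-31A} and \eqref{E-31B} I would derive from the structure of $h$ together with the identities already obtained. For \eqref{E-31A}, the standard argument is that $h$ anticommutes with $f$ because $h = \frac12\pounds_\xi f$ and differentiating $f^2 = -Q + \eta\otimes\xi$ along $\xi$ as a Lie derivative gives $(\pounds_\xi f)f + f(\pounds_\xi f) = -\pounds_\xi Q + (\pounds_\xi\eta)\otimes\xi + \eta\otimes\pounds_\xi\xi$; since $\pounds_\xi Q = 0$ by \eqref{E-nS-xi}, $\pounds_\xi\xi = 0$ trivially, and $\pounds_\xi\eta = 0$ (from $\nabla_\xi\eta = 0$ and $\nabla_\xi\xi = 0$, or because $d(\iota_\xi\eta) = d(1) = 0$ and $\iota_\xi d\eta$ — this needs the contact-type relation, but in the q.c.m. setting one shows $\pounds_\xi\eta = 0$ directly), we conclude $hf + fh = 0$. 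For \eqref{E-31B}, differentiate the commutation relation $[Q, f] = 0$ along $\xi$ using Lie derivatives: $\pounds_\xi(Qf - fQ) = (\pounds_\xi Q)f + Q(\pounds_\xi f) - (\pounds_\xi f)Q - f(\pounds_\xi Q) = 2Qh - 2hQ$ using $\pounds_\xi Q = 0$, and the left side is $\pounds_\xi 0 = 0$, giving $Qh = hQ$. The main obstacle I anticipate is the careful bookkeeping in deriving \eqref{E-Qnabla} — keeping track of where the $h$ and $fh$ terms come from when applying $f$ to the first relation and invoking $f^2 = -Q + \eta\otimes\xi$, since $Q$ does not commute trivially past the covariant derivative and one must use $[Q,f] = 0$, $\eta\circ\nabla\xi = 0$, and the fact that $\operatorname{Im}\nabla\xi \subset \ker\eta$ at exactly the right moments; a secondary subtlety is justifying $\pounds_\xi\eta = 0$ in the weak q.c.m. setting, which I would handle by first establishing $d\eta(\xi, \cdot) = 0$ from the symmetrized form of \eqref{E-quasi-01} or from \eqref{E-C2} evaluated appropriately.
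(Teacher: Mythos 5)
Your opening steps are sound and match the paper: setting $X=\xi$ in \eqref{E-quasi-01} (where $f\xi=0$ kills the second term and $h\xi=0$ kills the right side) gives \eqref{E-C3}; \eqref{E-C4} and $\nabla_\xi Q=0$ then follow exactly as you describe. But the $Y=\xi$ substitution is garbled: the second term of \eqref{E-quasi-01} is $(\nabla_{fX}f)(fY)$, which for $Y=\xi$ is the tensor $\nabla_{fX}f$ applied to the zero vector $f\xi$, hence simply $0$ --- there is nothing to ``handle,'' and your extra $-f\nabla_{fX}\xi$ should not be there. The correct relation is $f\nabla_X\xi = X+hX-\eta(X)\xi$, from which applying $f$ gives $Q\,\nabla\xi=-f-fh$. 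With your spurious term, applying $f$ would produce $-Q\nabla_X\xi-Q\nabla_{fX}\xi$ on the left and \eqref{E-Qnabla} would not come out. (You also never address the second equality $(\nabla\xi)Q=-f-fh$, which is not automatic: it is equivalent to $[Q,h]=0$.)

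The more serious problem is a circularity in your treatment of \eqref{E-nS-xi}, \eqref{E-31A} and \eqref{E-31B}. Since $\nabla_\xi Q=0$, one has $\pounds_\xi Q = Q\,\nabla\xi-(\nabla\xi)Q$, so your proposed ``direct computation'' of $\pounds_\xi Q$ amounts to showing that $Q$ commutes with $\nabla\xi$, which (via \eqref{E-Qnabla} and $[Q,f]=0$) is precisely $hQ=Qh$, i.e.\ \eqref{E-31B} --- the very identity you then propose to deduce \emph{from} $\pounds_\xi Q=0$ by Lie-differentiating $Qf-fQ=0$. Likewise your route to \eqref{E-31A} assumes $\pounds_\xi Q=0$, and your route to \eqref{E-C2} needs ``the $h$-terms to cancel,'' which is exactly $hf+fh=0$. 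So none of \eqref{E-C2}, \eqref{E-nS-xi}, \eqref{E-31A}, \eqref{E-31B} is actually proved. The missing input is a further exploitation of the quasi-contact identity: the paper replaces $X,Y$ by $fX,fY$ in \eqref{E-quasi-01}, subtracts \eqref{E-quasi-01}, and sets $Y=\xi$ to obtain \eqref{E-C2}; equivalently, one must feed \eqref{E-Qnabla} back into $2hX=f\nabla_X\xi-\nabla_{fX}\xi$ (valid by \eqref{E-C3}) to get the key identity $Qh=fhf$, from which $hf+fh=0$, $hQ=Qh$, $(\nabla\xi)Q=-f-fh$ and $\pounds_\xi Q=0$ all follow by the algebra you already have. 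Without some such step your chain of deductions does not close.
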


\begin{proof}
Setting $X=\xi$ in \eqref{E-quasi-01} and using $f\xi=0$ and $h\xi=0$, we obtain \eqref{E-C3}.
The equality \eqref{E-C4} follows directly from~\eqref{E-C3}.
From \eqref{E-quasi-01} with $Y=\xi$, we obtain
%\begin{align*}
 $f\nabla_X\,\xi = X - \eta(X)\,\xi + hX$.
%\end{align*}
Multiplying this by $f$ and using $f^2=-Q+\eta\otimes\xi$, see \eqref{E-nS-2.2}, we obtain
$Q\,\nabla\,\xi
%= (\nabla\,\xi)Q
= - f - fh$;
see \eqref{E-Qnabla}.

Replacing $X$ and $Y$ with $fX$ and $fY$, respectively, in \eqref{E-quasi-01} gives
\[
(\nabla_{fX} f)fY + (\nabla_{f^2X} f)f^2Y = 2\,g(fX,fY)\,\xi,
\]
and then using ${f}^2 = -Q +\eta\otimes\xi$, see \eqref{E-nS-2.2}, we obtain
\begin{align*}
%&(\nabla_{fX} f)fY + (\nabla_{-QX+\eta(X)\xi} f)(-QY+\eta(Y)\xi)
%= -2\,g(-QX+\eta(X)\xi, Y)\,\xi.  \\
& (\nabla_{QX} f)(QY) + (\nabla_{fX} f)fY
= 2\,g(QX, Y)\,\xi -2\eta(X)\,\eta(Y)\,\xi\\
& + \eta(Y)(\nabla_{QX} f)\xi + \eta(X)(\nabla_{\xi} f)QY
- \eta(X)\,\eta(Y)(\nabla_{\xi} f)\xi .
\end{align*}
 {Subtracting} from this \eqref{E-quasi-01}, we obtain
\begin{align}\label{Eq-2.36}
\notag
& (\nabla_{QX} f)(QY) - (\nabla_X f)Y + 2\,g(\widetilde QX, Y)\,\xi \\
\notag
& = \eta(Y)(\nabla_{QX} f)\xi + \eta(X)(\nabla_{\xi} f)QY
- \eta(X)\,\eta(Y)(\nabla_{\xi} f)\xi \\
& -\eta(X)\,\eta(Y)\,\xi +\eta(Y)X + \eta(Y) hX .
\end{align}
 {From} \eqref{E-C3} and the definition of $h$, we obtain
\begin{align}\label{Eq-2.11b}
 2\,hX= f\nabla_X\,\xi - \nabla_{fX}\,\xi .
\end{align}
 {Thus,} from \eqref{Eq-2.36} and \eqref{E-C3} we obtain
\begin{align*}
& (\nabla_{QX} f)(QY) - (\nabla_X f)Y + 2\,g(\widetilde QX, Y)\,\xi \\
& =\eta(Y) \big( X - f\nabla_{QX}\,\xi
 -\eta(X)\,\xi  + (1/2)f\nabla_X\,\xi - (1/2)\nabla_{fX}\,\xi\big) .
\end{align*}
 {Setting} $Y=\xi$ in the above equation, we obtain
\begin{align*}
 2X - 2\eta(X)\,\xi
 %+ f\nabla_X\,\xi
 - \nabla_{fX}\,\xi
 % = 2f\nabla_{QX}\,\xi - 2f\nabla_{\widetilde QX}\,\xi \\
 = f\nabla_{X}\,\xi .
\end{align*}
 {Applying} $f$ to the above equation and using \eqref{E-nS-2.2} and $\eta(\nabla_{X}\,\xi)=0$,
we obtain
\begin{align*}
 2fX = f\nabla_{fX}\,\xi + f^2\nabla_{X}\,\xi
 = f\nabla_{fX}\,\xi - Q\nabla_{X}\,\xi .
% + \eta(\nabla_{X}\,\xi)\,\xi.
\end{align*}
 {Multiplying} this by $Y$, we obtain \eqref{E-C2}.

{Using} \eqref{Eq-2.11b} and $f^2=-Q+\eta\otimes\xi$, see \eqref{E-nS-2.2}, we find
\begin{align*}
 2(hf+ f h)X = f^2\nabla_X\,\xi-\nabla_{f^2X}\,\xi
 = \nabla_{\widetilde Q}\,\xi - \widetilde Q\nabla_X\,\xi
 = (\pounds_{\xi}\,{\widetilde Q}) X.
\end{align*}
 {Using} \eqref{E-C3} and \eqref{E-C4},
we find $\nabla_{\xi}\,{\widetilde Q} = 0$.
We can also calculate the following, using \eqref{E-C3} and \eqref{E-Qnabla}:
\begin{align*}
 & 2hX=(\pounds_{\xi}{\widetilde Q})X
 =f\nabla_X\,\xi - \nabla_{fX}\,\xi \\
 & =-Q^{-1}f^2 -Q^{-1}f^2h+Q^{-1}f^2
 -Q^{-1}f(fh+(1/2)\pounds_{\xi}{\widetilde Q})X \\
 & = 2hX -(1/2)Q^{-1}f(\pounds_{\xi}{\widetilde Q})X.
\end{align*}
 {Thus,} $\pounds_{\xi}{\widetilde Q}=0$, which proves
\eqref{E-nS-xi} and \eqref{E-31A}.
Multiplying \eqref{E-31A} by $f$ and using \eqref{E-31A} gives~\eqref{E-31B}.
Using the above, we can complete the proof of \eqref{E-Qnabla}: $(\nabla\,\xi)QX
%= \nabla_{QX}\,\xi
=Q^{-1}(-fQX-fhQX) = - fX - fhX$.
\end{proof}

\begin{Remark}\rm
For weak contact metric manifolds, we generally have
$\nabla_\xi\,f\ne0$;
%$g((\nabla_\xi\,f)X,Y)\ne0$,
%=\frac12\,{\cal N}^{\,(5)}(\xi,X,Y)$,
see \cite[Corollary~1]{RP-2}, which differs from \eqref{E-C3}.
Thus, the class $w{\cal C}$ is not contained in $w{\cal Q}$, although ${\cal C}\subset{\cal Q}$.
\end{Remark}

\section{Main Results}
\label{sec:main}

The next theorem completes \cite[Theorem~2]{rov-117} and characterizes K-contact manifolds among weak q.c.m. manifolds satisfying \eqref{E-nS-xi}
%\begin{align}\label{E-nS-xi}
% (\nabla_\xi\,\widetilde Q)\,Y=0\quad (Y\in\ker\eta),
%\end{align}
by using the following property of K-contact manifolds; see~\cite{blair2010riemannian}:
\begin{equation}\label{E-30}
 \nabla\,\xi = -f .
\end{equation}
%The following result completes Theorem~2 in \cite{rov-117}.

\begin{Theorem}\label{T-05}
Let $M(f,Q,\xi,\eta,g)$ be a weak q.c.m. manifold and \eqref{E-30} be valid.
Then, $Q={\rm id}$, and $M(f,\xi,\eta,g)$ is a K-contact manifold.
%Sasakian if and only if the condition \eqref{E-CS} holds.
\end{Theorem}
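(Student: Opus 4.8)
The plan is to use the hypothesis $\nabla\,\xi=-f$ together with Lemma~\ref{P-2.6} to force $h=0$ and $Q=\mathrm{id}$, after which the K-contact conclusion becomes classical.

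First I would substitute $\nabla\,\xi=-f$ into the identity \eqref{Eq-2.11b}, namely $2hX=f\nabla_X\,\xi-\nabla_{fX}\,\xi$. Since $\nabla_X\,\xi=-fX$ and $\nabla_{fX}\,\xi=-f^2X$, the right-hand side collapses to $-f^2X+f^2X=0$, so $h=0$ immediately.

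Next I would insert $\nabla\,\xi=-f$ and $h=0$ into \eqref{E-Qnabla}, i.e. $Q\,\nabla\,\xi=-f-fh$, which yields $Qf=f$. Composing this once more with $f$ on the right and using $f^2=-Q+\eta\otimes\xi$ from \eqref{E-nS-2.2}, together with $Q\xi=\xi$ (a consequence of $\widetilde{Q}\,\xi=0$ in \eqref{E-wquasi-1}, which also gives $Q(\eta\otimes\xi)=\eta\otimes\xi$ as an operator), I obtain $f^2=Qf^2=Q(-Q+\eta\otimes\xi)=-Q^2+\eta\otimes\xi$, hence $Q^2=Q$; since $Q$ is nonsingular, $Q=\mathrm{id}$. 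I expect this to be the crux of the argument: everything before it is a direct substitution into Lemma~\ref{P-2.6}, and it is precisely the implication $Qf=f\Rightarrow Q^2=Q$ that makes the weak structure collapse to an ordinary one.

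Finally, with $Q=\mathrm{id}$ the data $(f,\xi,\eta,g)$ is a genuine almost-contact metric structure satisfying $\nabla\,\xi=-f$ and $h=0$. Skew-symmetry of $f$ then gives $g(\nabla_X\,\xi,Y)+g(\nabla_Y\,\xi,X)=-g(fX,Y)-g(fY,X)=0$, so $\xi$ is a Killing vector field, while $2\,d\eta(X,Y)=(\nabla_X\,\eta)Y-(\nabla_Y\,\eta)X=-g(fX,Y)+g(fY,X)=2\,g(X,fY)=2\,\Phi(X,Y)$, so $d\eta=\Phi$ and the structure is contact metric (and $\eta\wedge(d\eta)^n\ne0$, since $\Phi$ has rank $2n$). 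A contact metric manifold with Killing Reeb field is by definition K-contact; alternatively, one may invoke \cite[Theorem~2]{rov-117} at this point. This completes the proof.
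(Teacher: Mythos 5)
Your proposal is correct. It uses the same toolkit as the paper --- the hypothesis $\nabla\xi=-f$ fed into the identities of Lemma~\ref{P-2.6} --- but the route through it is genuinely different. The paper substitutes \eqref{E-30} into \eqref{E-Qnabla} to get $f(h-\widetilde Q)=0$, hence $h=\widetilde Q$, and then kills $\widetilde Q$ by combining the anticommutation relation \eqref{E-31A} with $[Q,f]=0$ (so $2f\widetilde Q=0$); finally it invokes \cite[Theorem~3.2]{KPSP-2014} to pass from $h=0$ to the contact metric property. You instead get $h=0$ in one line from $2hX=f\nabla_X\xi-\nabla_{fX}\xi$ (which is legitimate: it follows from \eqref{E-C3} and the definition of $h$, and is the same identity as \eqref{Eq-2.11}), then read off $Qf=f$ from \eqref{E-Qnabla} and conclude $Q^2=Q$, hence $Q=\mathrm{id}$ --- one could even stop at $\widetilde Qf=0$ and argue as the paper does that $f$ maps $\ker\eta$ onto itself. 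Your final step, verifying $(\nabla_X\eta)Y=g(\nabla_X\xi,Y)=g(X,fY)=\Phi(X,Y)$ directly so that $d\eta=\Phi$, is a nice improvement: it makes the proof self-contained where the paper leans on an external citation. Both arguments are valid; yours is slightly more elementary and avoids the detour through $h=\widetilde Q$ and $\pounds_\xi\widetilde Q=0$.
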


\begin{proof}
{Let a weak q.c.m. manifold $M(f,Q,\xi,\eta,g)$
satisfy the condition
%\eqref{E-nS-xi} and
\eqref{E-30}.
From \eqref{E-Qnabla}~and~\eqref{E-30}}, we obtain
$f(h-\widetilde Q)=0$. Since $f$ is non-degenerate on $\ker\eta$ and $h\xi=\widetilde Q \xi=0$ is true, we find $h=\widetilde Q$.
On the other hand, using
%\eqref{E-nS-10} and
%\eqref{E-30} and
\eqref{E-nS-xi}, we obtain $\pounds_{\xi}{\widetilde Q}=0$. Therefore, from \eqref{E-31A} and $[Q,f]=0$, see \eqref{E-wquasi-1}, we find $f\widetilde Q=0$. From this, since $f$ is non-degenerate on $\ker\eta$, we obtain $\widetilde Q=0$. Therefore, $M(f,\xi,\eta,g)$ is a q.c.m. manifold.
Since $h=0$ also holds, according to \cite[Theorem~3.2]{KPSP-2014}, $M(f,\xi,\eta,g)$ is a contact metric manifold.
Next,
%\eqref{E-deta-Q-Phi} reduces to $d\eta=\Phi$.
using \eqref{E-30} and the skew-symmetry of $f$, we can conclude that $\xi$ is a Killing vector field:
\begin{align*}
 g(\nabla_X\xi,Y)+g(\nabla_Y\xi,X)=-g(fX,Y)-g(fY,X)=0.
 \end{align*}
  {Therefore,} $M(f,\xi,\eta,g)$ is a K-contact manifold.
 %The converse is evident.
\end{proof}

The
%Riemannian
curvature tensor is given by
$R_{X,Y}\,Z=\nabla_X\nabla_Y Z-\nabla_Y\nabla_X Z
-\nabla_{[X,Y]} Z$.

The~following result generalizes \cite[Proposition~7.1]{blair2010riemannian} on contact metric manifolds.

\begin{Proposition}\label{P-7.1}
For a weak q.c.m. structure $(f,Q,\xi,\eta,g)$, %satisfying \eqref{E-Lie-Q},
the following equalities are true:
\begin{align}\label{E-Lie-Q1}
 & (\nabla_\xi\,h)X = Q^{-1}\big( fX - h^2fX\big) -f R_{X,\xi}\,\xi, \\
 \label{E-Lie-Q2}
 & Q R_{\,\xi,X}\,\xi - f R_{\,\xi, fX}\,\xi
 = 2 h^2 X + (Q+Q^{-1}) f^2 X .
\end{align}
\end{Proposition}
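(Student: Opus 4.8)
The plan is to proceed in close parallel to the classical proof of Proposition~7.1 in \cite{blair2010riemannian}, but carrying the tensor $Q$ (equivalently $\widetilde Q$) through every step and invoking Lemma~\ref{P-2.6} wherever the classical argument uses the relations $\nabla\xi=-f$, $\nabla_\xi f=0$, $hf+fh=0$, etc. The starting point is the formula $2hX = f\nabla_X\xi-\nabla_{fX}\xi$ from \eqref{Eq-2.11b}; solving for $\nabla_{fX}\xi$ and combining with \eqref{E-Qnabla} expresses $\nabla_X\xi$ in terms of $f$, $h$, and $Q^{-1}$. First I would compute $(\nabla_\xi h)X$ directly from this expression for $\nabla\xi$: apply $\nabla_\xi$ to the identity $Q\nabla_X\xi=-fX-fhX$ (equivalently to \eqref{Eq-2.11b}), using $\nabla_\xi Q=0$ and $\nabla_\xi f=0$ from \eqref{E-nS-xi}, \eqref{E-C3}, to reduce everything to the second covariant derivatives of $\xi$ along $\xi$, which is where the curvature term $R_{X,\xi}\xi$ enters via $\nabla_X\nabla_\xi\xi-\nabla_\xi\nabla_X\xi-\nabla_{[X,\xi]}\xi=R_{X,\xi}\xi$ together with $\nabla_\xi\xi=0$.

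More concretely, for \eqref{E-Lie-Q1} I would write $R_{X,\xi}\xi = \nabla_X(\nabla_\xi\xi)-\nabla_\xi(\nabla_X\xi)-\nabla_{[X,\xi]}\xi = -\nabla_\xi(\nabla_X\xi)-\nabla_{\nabla_X\xi}\xi+\nabla_{\nabla_\xi X}\xi$, using $\nabla_\xi\xi=0$ and torsion-freeness. Substituting $\nabla_X\xi = -Q^{-1}(fX+fhX)$ and $\nabla_\xi X$ where it appears, then using that $\nabla_\xi$ commutes with $Q^{-1}$ and $f$, the term $\nabla_\xi(\nabla_X\xi)$ produces $-Q^{-1}f(\nabla_\xi h)X - Q^{-1}(f h+f)(\nabla_\xi X)$; the remaining two terms $\nabla_{\nabla_X\xi}\xi$ and $\nabla_{\nabla_\xi X}\xi$ are again rewritten via the formula for $\nabla_\bullet\xi$, giving quadratic expressions in $f,h,Q^{-1}$ acting on $X$. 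Collecting and applying $f$ on the left (using $f^2=-Q+\eta\otimes\xi$, $fh+hf=0$, $hQ=Qh$, $f\xi=0$) should isolate $(\nabla_\xi h)X$ and yield \eqref{E-Lie-Q1}; the term $h^2fX$ arises from the composition $f\cdot Q^{-1}f h\cdot(\text{term involving }h)$ after using $[Q,h]=0$ and $hf=-fh$.

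For \eqref{E-Lie-Q2}, the plan is to exploit the symmetry of the curvature operator. Polarizing \eqref{E-Lie-Q1} in the combination $X\mapsto fX$ and using $hf=-fh$, $f^2=-Q+\eta\otimes\xi$, and $\nabla_\xi h=-(\nabla_\xi h)$-type relations is not quite enough; instead I would use that $g(R_{X,\xi}\xi,Y)=g(R_{Y,\xi}\xi,X)$, i.e. the operator $X\mapsto R_{X,\xi}\xi$ is self-adjoint, together with the fact that $h$ is self-adjoint and $f$ is skew-adjoint. Taking the $g$-adjoint of the operator identity obtained by rearranging \eqref{E-Lie-Q1} — namely $fR_{X,\xi}\xi = Q^{-1}(fX-h^2fX)-(\nabla_\xi h)X$, and noting $(\nabla_\xi h)$ is self-adjoint because $h$ is and $\nabla_\xi$ preserves $g$ — and then comparing with the original gives a relation between $fR_{X,\xi}\xi$ and $R_{X,\xi}(fX)$-type terms. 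Applying $Q$ to \eqref{E-Lie-Q1}, replacing $X$ by $fX$ in a second copy of it, adding the two after suitable multiplication by $f$, and cancelling the $(\nabla_\xi h)$ terms via the self-adjointness identity should produce \eqref{E-Lie-Q2}; the terms $2h^2X$ and $(Q+Q^{-1})f^2X$ come respectively from the $h^2$-contributions adding up and from $Q\cdot Q^{-1}f^2X$ plus $Q^{-1}f\cdot f^2X$-type terms after using $f^3=-fQ=-Qf$ on $\ker\eta$.

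The main obstacle I expect is bookkeeping: keeping track of the non-commuting insertions of $Q^{-1}$ that the weak structure forces (in the classical case $Q=\mathrm{id}$ these are invisible), and verifying carefully that $Q^{-1}$ commutes with $f$ and $h$ on all the relevant terms — this follows from $[Q,f]=0$ in \eqref{E-wquasi-1} and $[Q,h]=0$ in \eqref{E-31B}, but one must be disciplined about applying it. A secondary subtlety is the treatment of the $\xi$-direction: all the operators $f,h,\widetilde Q$ annihilate $\xi$ and $\eta\circ f=\eta\circ h=0$, so the $\eta\otimes\xi$ terms in $f^2=-Q+\eta\otimes\xi$ drop out whenever composed appropriately, but one should check this does not fail when $X$ has a $\xi$-component (it does not, since both sides of \eqref{E-Lie-Q1}--\eqref{E-Lie-Q2} vanish on $\xi$ by $\nabla_\xi\xi=0$ and $f\xi=h\xi=0$, so it suffices to verify the identities on $\ker\eta$). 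Once the algebra on $\ker\eta$ is organized, both identities should fall out.
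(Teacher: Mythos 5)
Your treatment of \eqref{E-Lie-Q1} is essentially the paper's argument: both start from $\nabla_\xi\xi=0$ and the expression $\nabla\xi=-Q^{-1}(f+fh)$ from \eqref{E-Qnabla}, expand the curvature term $R_{\cdot,\xi}\xi$ via the definition, push $\nabla_\xi$ through $Q$ and $f$ using \eqref{E-nS-xi} and \eqref{E-C3} so that $(\nabla_\xi h)X$ appears, and then apply $f$ and use $f^2=-Q+\eta\otimes\xi$ together with $hf=-fh$, $hQ=Qh$ to collect the $Q^{-1}(fX-h^2fX)$ term. The bookkeeping concerns you raise (commuting $Q^{-1}$ past $f$ and $h$, and the harmlessness of the $\xi$-component) are exactly the right ones and are resolved by \eqref{E-wquasi-1} and \eqref{E-31B}, as you say. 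So the first half is fine and matches the paper.

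The second half has a genuine gap: you propose to derive \eqref{E-Lie-Q2} by ``cancelling the $(\nabla_\xi h)$ terms via the self-adjointness identity,'' resting on ``the fact that $h$ is self-adjoint.'' For a weak q.c.m.\ structure $h$ is \emph{not} self-adjoint in general: by Proposition~\ref{P-2.3} (formula \eqref{Eq-2.14}) self-adjointness of $h$ is equivalent to ${\cal N}^{(2)}\equiv0$, which is an extra hypothesis, and by Proposition~\ref{P-06} the Killing case forces $h$ to be \emph{skew}-symmetric. So any step that uses $h=h^*$, or that $(\nabla_\xi h)$ is self-adjoint, is unavailable here, and the symmetry $g(R_{X,\xi}\xi,Y)=g(R_{Y,\xi}\xi,X)$ does not rescue it. Moreover, the route you dismiss as ``not quite enough'' is in fact sufficient and is what the paper does: substitute $fX$ for $X$ in \eqref{E-Lie-Q1}, use the purely algebraic anticommutation $(\nabla_\xi h)f=-f(\nabla_\xi h)$ --- which follows from \eqref{E-C3} and \eqref{E-31A}, with no symmetry assumption on $h$ --- together with $h^2f=fh^2$; separately apply $f$ to \eqref{E-Lie-Q1} and reduce $f^2$ via \eqref{E-nS-2.2}; subtracting the two resulting identities makes the $f(\nabla_\xi h)X$ terms cancel and yields \eqref{E-Lie-Q2}. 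You should replace the self-adjointness argument by this substitution-and-anticommutation argument; as written, your derivation of \eqref{E-Lie-Q2} would fail.
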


\begin{proof}
Using \eqref{E-Qnabla} and \eqref{E-C3}, we can compute
\begin{align*}
 QR_{\,\xi,X}\,\xi =
 f\nabla_\xi(-X - hX) + f[\xi,X] + fh[\xi,X].
\end{align*}
 {Applying} $f$ and using \eqref{E-nS-2.2}, \eqref{E-31B}
and \eqref{E-Qnabla}, we obtain
\begin{align*}%\label{E-fR-a}
%\notag
 Qf R_{\,\xi,X}\,\xi & =
 f^2\nabla_\xi(-X - hX) + f^2[\xi,X] + f^2h[\xi,X] \\
%\notag
& = Q\nabla_\xi(X + hX) - \eta(\nabla_\xi(X + hX))\xi
-Q[\xi,X] +\eta([\xi,X])\xi -Qh[\xi,X] \\
& = Q\big( (\nabla_\xi h)X + \nabla_X\,\xi + h\nabla_X\,\xi \big).
% -Q^{-1}f^2\,\nabla_\xi(X + hX) + f^2[\xi,X]  + f^2 h[\xi,X].
%(\nabla_\xi\,h)X + Q\,\nabla_X\,\xi + Q\,h\,\nabla_X\,\xi
%-\widetilde Q\,h\,\nabla_\xi\,X -\widetilde Q\,\nabla_\xi\,X \\
%& = (\nabla_\xi\,h)X - f X + h^2 f X - \widetilde Q Q^{-1}(f+fh)X.
\end{align*}
 {According} to the above, since $Q$ is nonsingular, the following is true:
\begin{align*}
 f R_{\,\xi,X}\,\xi =
 (\nabla_\xi h)X + \nabla_X\,\xi + h\nabla_X\,\xi.
\end{align*}
 {Using} \eqref{E-Qnabla}{,}
%MDPI: Please confirm the revision.
 \eqref{E-31A} {and} \eqref{E-31B}, we obtain the following formula \eqref{E-Lie-Q1}:
\begin{align}\label{E-fR}
%\notag
 f R_{\,\xi,X}\,\xi = (\nabla_\xi\,h)X
 + Q^{-1}\big(- f X + h^2 f X \big).
\end{align}
%Using \eqref and \eqref , this becomes
 {Next,} applying $f$ to \eqref{E-Lie-Q1} and using
$f^2X=-QX+\eta(X)\,\xi$, see \eqref{E-nS-2.2}, we find
\begin{align}\label{E-fR2}
%\notag
 Q R_{\,\xi,X}\,\xi
 %= -f(\nabla_\xi\,h)X - f^2 X -f h^2fX - f\widetilde Q Q^{-1}(fX+fhX).
 = h^2 X + Qf^2 X -f(\nabla_\xi\,h)X .
\end{align}
 {Using} \eqref{E-C3} and \eqref{E-31A}, we obtain $(\nabla_\xi\,h)fX=-f(\nabla_\xi\,h)X$.
From \eqref{E-fR}, using $h^2f=fh^2$, see \eqref{E-31B},
%and \eqref{E-Lie-Q},
we obtain
\begin{align}\label{E-fR3}
%\notag
 f R_{\,\xi, fX}\,\xi = (\nabla_\xi\,h)fX
 + Q^{-1}( - f^2 X + h^2 f^2 X)
 %\\ &
 = -h^2X - Q^{-1}f^2X - f(\nabla_\xi\,h)X .
\end{align}
 {Subtracting} \eqref{E-fR3} from \eqref{E-fR2} yields \eqref{E-Lie-Q2}.
\end{proof}

%\begin{Lemma}\end{Lemma}
%\begin{proof}\end{proof}

%\begin{Theorem}
%Let $M$ be a weak quasi contact metric manifold with %$\tr\widetilde Q=0$. If
%\[
% \lambda_+ Ric (\xi,\xi)\ge 2n - \tr h^2,
%\]
%and if the equality is true,
%then $\widetilde Q=0$, $\lambda_+=1$ and $M$ is a contact %metric manifold.
%\end{Theorem}

%The following condition is valid for any weak K-contact manifold,
%see Proposition~1 in \cite{rov-117}:
%\begin{align}
%%\label{E-Lie-Q}
%%& \pounds_{\xi}\,{\widetilde Q} = 0, \\
%%[\pounds_{\xi}{\widetilde Q}, f]=0,\quad
% \label{E-nS-xi}
% \nabla_{\xi}\,{\widetilde Q} = 0.
%% \quad (Y\in\ker\eta),
%\end{align}

The following result generalizes Proposition~2.3 in \cite{KPSP-2014} for $Q={\rm id}$.

\begin{Proposition}\label{P-2.3}
 Let a weak a.c.m. structure $(f,Q,\xi,\eta,g)$ satisfy
 $\nabla_\xi\,f = 0$; see \eqref{E-C3}.
 Then,
 \begin{align}\label{Eq-2.14}
 g(hX,Y) - g(hY,X) = -(1/2)\,{\cal N}^{(2)}(X,Y);
\end{align}
hence,  $h$ is self-adjoint if and only if
${\cal N}^{(2)}\equiv0$.
\end{Proposition}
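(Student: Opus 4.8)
The plan is to compute both $g(hX,Y)$ and $g(hY,X)$ starting from the definition $h = (1/2)\,\pounds_\xi f$ and the formula $({\cal L}_\xi f)X = [\xi, fX] - f[\xi,X]$. First I would rewrite the Lie bracket in terms of the Levi--Civita connection: since $[\xi, Z] = \nabla_\xi Z - \nabla_Z \xi$, we get $2hX = \nabla_\xi(fX) - \nabla_{fX}\xi - f\nabla_\xi X + f\nabla_X\xi$. Now invoke the hypothesis $\nabla_\xi f = 0$, i.e. $(\nabla_\xi f)X = \nabla_\xi(fX) - f\nabla_\xi X = 0$, so the two $\nabla_\xi$ terms cancel and we are left with the clean identity $2hX = f\nabla_X\xi - \nabla_{fX}\xi$ (this is exactly \eqref{Eq-2.11b} from Lemma~\ref{P-2.6}, which we may quote directly).

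Next I would pair this with an arbitrary $Y$ and exploit the skew-symmetry of $f$ with respect to $g$ (stated after \eqref{E-wquasi-1}): $2g(hX,Y) = g(f\nabla_X\xi, Y) - g(\nabla_{fX}\xi, Y) = -g(\nabla_X\xi, fY) - g(\nabla_{fX}\xi, Y)$. Writing the analogous expression for $2g(hY,X)$ and subtracting, we obtain
\begin{align*}
2\big(g(hX,Y) - g(hY,X)\big)
&= -g(\nabla_X\xi, fY) - g(\nabla_{fX}\xi, Y) + g(\nabla_Y\xi, fX) + g(\nabla_{fY}\xi, X).
\end{align*}
The idea is then to recognize the right-hand side in terms of $\nabla\eta$: since $\eta(Z) = g(\xi,Z)$ and $\nabla g = 0$, one has $(\nabla_W\eta)(Z) = g(\nabla_W\xi, Z)$. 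So the bracket becomes $-(\nabla_X\eta)(fY) - (\nabla_{fX}\eta)(Y) + (\nabla_Y\eta)(fX) + (\nabla_{fY}\eta)(X)$, which is precisely the negative of the expression \eqref{Eq-2.9} defining ${\cal N}^{(2)}(X,Y)$ for a weak a.c.m. structure. Hence $2\big(g(hX,Y) - g(hY,X)\big) = -{\cal N}^{(2)}(X,Y)$, giving \eqref{Eq-2.14}.

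The final clause is then immediate: $h$ is self-adjoint precisely when $g(hX,Y) = g(hY,X)$ for all $X,Y$, which by \eqref{Eq-2.14} is equivalent to ${\cal N}^{(2)}(X,Y) = 0$ for all $X,Y$, i.e. ${\cal N}^{(2)}\equiv 0$. I do not anticipate a genuine obstacle here; the only point requiring a little care is the bookkeeping of signs when applying skew-symmetry of $f$ and matching terms against the particular ordering of the four summands in \eqref{Eq-2.9} — a sign slip there would flip the factor $-1/2$. One should also double-check that the $\eta$-components (the $\eta(X)$, $\eta(Y)$ terms) genuinely drop out; they do, because $f\xi = 0$ and $\eta\circ f = 0$ (from \eqref{E-wquasi-1}) kill every term in which a $\xi$-direction meets an $f$, so the computation is insensitive to whether $X,Y$ lie in $\ker\eta$.
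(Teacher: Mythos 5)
Your proposal is correct and follows essentially the same route as the paper: derive $2hX = f\nabla_X\xi - \nabla_{fX}\xi$ from $\nabla_\xi f=0$ (this is exactly \eqref{Eq-2.11}), then pair with $Y$, use skew-symmetry of $f$ and $(\nabla_W\eta)(Z)=g(\nabla_W\xi,Z)$, and match against \eqref{Eq-2.9}. You have simply written out in full the step the paper compresses into ``From \eqref{Eq-2.9} and \eqref{Eq-2.11}, we obtain \eqref{Eq-2.14},'' and your sign bookkeeping checks out.
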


\begin{proof} Note that $h\xi=2{\cal N}^{\,(3)}(\xi)=0$ and
$hX\perp\xi$ for $X\perp\xi$.
Using $\nabla_\xi\,f = 0$, we obtain
%\eqref{E-C3}
\begin{align}\label{Eq-2.11}
 2hX =
 %(\nabla_{\xi}\,f)X
 - \nabla_{fX}\,\xi + f\nabla_{X}\,\xi.
\end{align}
 {From} \eqref{Eq-2.9} and \eqref{Eq-2.11}, we obtain \eqref{Eq-2.14}, which completes the proof.
\end{proof}

The following result generalizes Theorem~4.2 in \cite{CKPS-2016}.

\begin{Proposition}\label{P-06}
Let $M(f,Q,\xi,\eta,g)$ be a weak q.c.m. manifold.
If $\xi$ is a Killing vector field,
then the tensor $h$ is skew-symmetric
(and $h^2$ is nonpositive definite); if the contact distribution $\ker\eta$ is integrable,
then $h$ is self-adjoint (and $h^2$ is nonnegative definite).
\end{Proposition}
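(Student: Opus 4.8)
The plan is to establish both implications from the two key identities already in hand: formula \eqref{E-Lie-Q2} of Proposition~\ref{P-7.1} together with \eqref{Eq-2.14} of Proposition~\ref{P-2.3}. First I would record that for a weak q.c.m. manifold $\nabla_\xi f = 0$ holds by \eqref{E-C3}, so Proposition~\ref{P-2.3} applies and the deviation of $h$ from self-adjointness is measured by ${\cal N}^{(2)}$. If $\xi$ is Killing, then $\mathcal{L}_\xi g = 0$ forces $\nabla\xi$ to be skew-symmetric; combined with \eqref{Eq-2.11b} (equivalently \eqref{Eq-2.11}), which expresses $2h = f\circ\nabla\xi - \nabla_{f\,\cdot}\,\xi$, a short computation using skew-symmetry of both $f$ and $\nabla\xi$ and the relation $g(f\nabla_X\xi,Y) = -g(\nabla_X\xi, fY)$ shows $g(hX,Y) = -g(hY,X)$, i.e. $h$ is skew-symmetric. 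Then $h^2 = -h^\top h$ is nonpositive definite, since $g(h^2 X, X) = g(hX, h^\top X) = -g(hX,hX)\le 0$ (using $h^\top = -h$).

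For the integrability direction, the point is that $\ker\eta$ is integrable precisely when $d\eta$ vanishes on $\ker\eta$, and by the identity ${\cal N}^{(2)}(X,Y) = 2\,d\eta(fX,Y) - 2\,d\eta(fY,X)$ this is exactly the condition ${\cal N}^{(2)}\equiv 0$ (one uses that $f$ maps $\ker\eta$ onto itself bijectively, and that $d\eta(\xi,\cdot) = d\eta(f\xi,\cdot) = 0$ on a weak q.c.m. manifold, which follows from ${\cal N}^{(4)}=0$ or directly from \eqref{E-C4}). Hence by Proposition~\ref{P-2.3}, integrability of $\ker\eta$ gives that $h$ is self-adjoint. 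Then $h^2 = h^\top h$ is nonnegative definite, since $g(h^2 X, X) = g(hX, hX)\ge 0$.

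The remaining subtlety — and the step I would be most careful about — is the claim that integrability of $\ker\eta$ is equivalent to ${\cal N}^{(2)}\equiv 0$ on a weak q.c.m. manifold. The forward direction is the clean one: if $\ker\eta$ is integrable then $d\eta|_{\ker\eta\times\ker\eta}=0$, and since for $X,Y\in\ker\eta$ we also have $fX,fY\in\ker\eta$, the formula for ${\cal N}^{(2)}$ shows ${\cal N}^{(2)}(X,Y)=0$ for $X,Y\in\ker\eta$; the cases where $X$ or $Y$ equals $\xi$ are handled by $f\xi=0$ and the vanishing of $d\eta(\xi,\cdot)$. It is only this direction that is needed, so I would not need the converse. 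I would double-check that ${\cal N}^{(2)}=0$ indeed holds as stated for weak contact metric manifolds but is \emph{not} automatically zero for a general weak q.c.m. manifold, so that the hypothesis on $\ker\eta$ is genuinely being used; the identity \eqref{Eq-2.9} together with $2d\eta(X,Y)=(\nabla_X\eta)Y-(\nabla_Y\eta)X$ makes the equivalence ${\cal N}^{(2)}(X,Y) = 2d\eta(fX,Y)-2d\eta(fY,X)$ transparent, and this is the linchpin of the self-adjoint case.
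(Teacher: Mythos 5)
Your proof is correct, and it takes a genuinely different (if closely related) route from the paper's. The paper proves both halves in one stroke: it substitutes $\nabla\xi=-Q^{-1}(f+fh)$ from \eqref{E-Qnabla} into the symmetrization and antisymmetrization of $g(\nabla_X\xi,Y)$, identifies the Killing condition with the vanishing of the symmetric part and the integrability of $\ker\eta$ with the vanishing of $g([Y,X],\xi)=g(\nabla_X\xi,Y)-g(\nabla_Y\xi,X)$, and reads off $h\pm h^*=0$ using the nondegeneracy of $Q^{-1}f$ on $\ker\eta$ (together with $h\xi=h^*\xi=0$). You instead treat the two halves separately: the Killing half via \eqref{Eq-2.11b} and the skew-symmetry of $f$ and $\nabla\xi$ — a computation that checks out, since $2g(hX,Y)=-(\nabla_{fX}\eta)(Y)-(\nabla_X\eta)(fY)$ becomes manifestly antisymmetric in $X,Y$ once $\nabla\xi$ is skew — and the integrable half by routing through Proposition~\ref{P-2.3} and the identity ${\cal N}^{(2)}(X,Y)=2d\eta(fX,Y)-2d\eta(fY,X)$. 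Your handling of the latter is careful and complete: you correctly reduce integrability to $d\eta|_{\ker\eta\times\ker\eta}=0$, use $f(\ker\eta)\subset\ker\eta$, and dispose of the $\xi$-components via $f\xi=0$ and \eqref{E-C4}, and you correctly note that only the implication (integrable $\Rightarrow{\cal N}^{(2)}=0$) is needed. What your decomposition buys is a cleaner second half: in the paper's antisymmetrized identity the pure $f$-terms do not cancel (they contribute $-2g(Q^{-1}fX,Y)$), a term the displayed formula in the paper glosses over, whereas your detour through ${\cal N}^{(2)}$ and \eqref{Eq-2.14} sidesteps this entirely. What the paper's unified computation buys is brevity and symmetry between the two hypotheses. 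Both arguments establish the stated implications.
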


\begin{proof}
Using \eqref{E-Qnabla} and \eqref{E-31A}--\eqref{E-31B}, we can calculate
\begin{align*}
 & g(\nabla_{X}\,\xi, Y) \pm g(\nabla_{Y}\,\xi, X)
 = g(Q^{-1}(-f-fh)X, Y) \pm g(Q^{-1}(-f-fh)Y, X) \\
 & = -g(Q^{-1} f(h\pm h^*)X, Y) = g((h\pm h^*)X, Q^{-1} fY),
\end{align*}
where $h^*$ is the conjugate tensor to $h$.
According to the above, if $\xi$ is a Killing vector field,
that is, $g(\nabla_{X}\,\xi, Y) + g(\nabla_{Y}\,\xi, X) =0$
for all $X,Y$, then $h+h^*=0$,
and if the distribution $\ker\eta$ is integrable then $g([Y,X],\xi)=g(\nabla_{X}\,\xi, Y) - g(\nabla_{Y}\,\xi, X)=0$ %for all $X,Y\perp\xi$, then $h-h^*=0$.
\end{proof}

For a weak a.c.m. manifold $M(f,Q,\xi,\eta,g)$,
we will build an $f$-\textit{basis}
%$\{\xi, e_1, \varphi\,e_1,\ldots, e_n, fe_n\}$
consisting of mutually orthogonal nonzero vectors
at a point $x\in M$.
Let $e_1\in(\ker\eta)_x$ be a unit eigenvector of the self-adjoint operator $Q>0$ with the eigenvalue $\lambda_1>0$. Then, $fe_1\in(\ker\eta)_x$ is orthogonal to $e_1$
and $Q(fe_1) = f(Qe_1) = \lambda_1 fe_1$.
Thus, the subspace orthogonal to the plane $span\{e_1,fe_1\}$ is $Q$-invariant.
 There exists a unit vector $e_2\in(\ker\eta)_x$ such that $e_2\perp span\{e_1, fe_1\}$
and $Q\,e_2= \lambda_2 e_2$  for some $\lambda_2>0$.
Obviously, $Q(fe_2) = f(Q\,e_2) = \lambda_2 fe_2$.
All five vectors $\{\xi, e_1, fe_1,e_2, fe_2\}$ are nonzero and mutually orthogonal.
Continuing in the same manner, we find a basis $\{\xi, e_1, fe_1,\ldots, e_n, fe_n\}$ of $T_x M$
consisting of mutually orthogonal vectors; see \cite{rov-128}.
Note that $g(fe_i,fe_i)=g(Qe_i,e_i)=\lambda_i$
and $\tr Q=2\sum_{i=1}^n \lambda_i$.
%Put $\lambda_-=\min\{\lambda_i\}$ and
%$\lambda_+=\max\{\lambda_i\}$ for $1\le i\le n$.
%, where $\{1+\lambda_i\}$ are the eigenvalues of $Q$.

\smallskip

%The following condition will be used:
%On a weak contact metric manifold,
%\begin{align}\label{E-nS-10}
% (\nabla_X\,\widetilde Q)\,Y=0\quad (X\in TM,\ Y\in\ker\eta).
%\end{align}

The following condition is stronger than $\nabla_\xi\,Q=0$, see \eqref{E-nS-xi}, and is valid when $Q={\rm id}$:
\begin{align}\label{E-nS-10}
 (\nabla_X\,Q)\,Y=0\quad (X,Y\in\ker\eta).
\end{align}
 {The} exterior derivative of $\Phi$ is given by
\begin{eqnarray*}%\label{3.3A}
 d\Phi(X,Y,Z) && = ({1}/{3})\,\big\{ X\,\Phi(Y,Z) + Y\,\Phi(Z,X) + Z\,\Phi(X,Y) \notag\\
 && -\,\Phi([X,Y],Z) - \Phi([Z,X],Y) - \Phi([Y,Z],X)\big\}.
\end{eqnarray*}
 {The} next result completes Theorem~3.2 in \cite{KPSP-2014}.

\begin{Proposition}\label{P-05}
Let a weak q.c.m. manifold
%$M(f,Q,\xi,\eta,g)$
satisfy the condition \eqref{E-nS-10}.
If $h$ is self-adjoint, then $\eta$ is a contact form and $d\Phi=0$.
%A weak contact metric structure is characterized as a weak almost %contact metric structure $(f,Q,\xi,\eta,g)$ such that
%$h$ is self-adjoint and \eqref{E-quasi-01} and \eqref{E-nS-10} are true.
\end{Proposition}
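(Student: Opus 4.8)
The plan is to show, first, that the hypothesis \eqref{E-nS-10} already forces $Q={\rm id}$, so that $(f,\xi,\eta,g)$ is an ordinary q.c.m. structure, and then to use the self-adjointness of $h$ to identify it as a contact metric structure, from which $\eta$ being a contact form and $d\Phi=0$ follow at once (the latter because $d\Phi=d(d\eta)$).

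For the first step, I would record the auxiliary fact that, under \eqref{E-nS-10}, $Q$ may be pulled through covariant derivatives of $f$ taken along the contact distribution: for $A,B\in\ker\eta$,
\[
 (\nabla_A f)(QB)=Q\,(\nabla_A f)\,B ,
\]
which follows from \eqref{E-nS-10}, the relation $[Q,f]=0$ of \eqref{E-wquasi-1}, and the Leibniz rule (note $fB\in\ker\eta$). Then, for $X,Y\in\ker\eta$, I would put $(X,Y)=(fX,QY)$ and $(X,Y)=(QX,Y)$ in \eqref{E-quasi-01} — whose right-hand side loses its $\eta(\cdot)$-terms here — using $f^2X=-QX$, $f(QX)=Q(fX)$, $f(QY)=Q(fY)$ (see \eqref{E-nS-2.2}, \eqref{E-wquasi-1}) and the pull-through identity. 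The first choice yields
\[
 (\nabla_{QX}f)(fY)=(\nabla_{fX}f)\,Y-2\,g(fX,QY)\,\xi ,
\]
and replacing $X$ by $fX$ in it (so the direction $fX$ becomes $f^2X=-QX$) gives
\[
 (\nabla_{Q(fX)}f)(fY)=-(\nabla_{QX}f)\,Y+2\,g(QX,QY)\,\xi .
\]
Substituting this into the second choice, $(\nabla_{QX}f)Y+(\nabla_{Q(fX)}f)(fY)=2\,g(QX,Y)\,\xi$, the covariant-derivative terms cancel and leave $g(QX,QY)=g(QX,Y)$ for all $X,Y\in\ker\eta$. Since $Q$ is a positive-definite automorphism of $\ker\eta$, letting $QX$ run over $\ker\eta$ gives $g\big(Z,\widetilde Q Y\big)=0$ for all $Z,Y\in\ker\eta$, hence $\widetilde Q Y=0$; with $\widetilde Q\,\xi=0$ this yields $\widetilde Q=0$, i.e. $Q={\rm id}$.

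For the second step, with $Q={\rm id}$ equality \eqref{E-Qnabla} of Lemma~\ref{P-2.6} reads $\nabla_X\xi=-fX-fhX$, so
\[
 2\,d\eta(X,Y)=g(\nabla_X\xi,Y)-g(\nabla_Y\xi,X)=-g(fX,Y)-g(fhX,Y)+g(fY,X)+g(fhY,X).
\]
By the skew-symmetry of $f$, the self-adjointness of $h$, and $hf+fh=0$ (see \eqref{E-31A}), the $h$-terms cancel and $g(fY,X)=-g(fX,Y)$, so $2\,d\eta(X,Y)=-2\,g(fX,Y)=2\,g(X,fY)=2\,\Phi(X,Y)$; that is, $d\eta=\Phi$ (one may instead invoke \cite[Theorem~3.2]{KPSP-2014} at this point). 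Since $\Phi$ restricted to the $2n$-dimensional distribution $\ker\eta$ is nondegenerate (as $f$ is nonsingular there and $g$ is positive definite), $\eta\wedge(d\eta)^n=\eta\wedge\Phi^n$ is nowhere zero, so $\eta$ is a contact form; and $d\Phi=d(d\eta)=0$.

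I expect the main obstacle to be the first step: noticing that \eqref{E-nS-10} pins $Q$ down to the identity, and in particular finding the precise substitutions into \eqref{E-quasi-01} that make the $\nabla f$-terms cancel. Once $Q={\rm id}$ is in hand, the remainder is a short routine computation with Lemma~\ref{P-2.6}.
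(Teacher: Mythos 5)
Your proof is correct, but it takes a genuinely different route from the paper's. The paper never concludes $Q={\rm id}$: it keeps the weak structure throughout, combines \eqref{E-C2} with \eqref{Eq-2.9} and ${\cal N}^{(2)}\equiv0$ (via Proposition~\ref{P-2.3} and the self-adjointness of $h$) to arrive at $d\eta\big(X+\frac12\widetilde QX,Y\big)=\Phi(X,Y)$, and then evaluates this on an $f$-basis with general eigenvalues $\lambda_i>0$. You instead show first that \eqref{E-nS-10} together with the weak q.c.m.\ identity \eqref{E-quasi-01} already forces $\widetilde Q=0$ --- and I checked your cancellation carefully: the pull-through identity $(\nabla_Af)(QB)=Q(\nabla_Af)B$ for $A,B\in\ker\eta$ is valid, the substitutions $(fX,QY)$ and $(QX,Y)$ are legitimate (the $\eta$-terms do vanish), and the resulting $g(QX,QY)=g(QX,Y)$ on $\ker\eta$ with $Q$ positive definite indeed gives $Q={\rm id}$. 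The same conclusion can be cross-checked by restricting the paper's own identity \eqref{Eq-2.36} to $X,Y\in\ker\eta$ and applying your pull-through identity, which yields $g(\widetilde QfX,fY)=0$ and hence $\widetilde Q=0$ again. Notably, your step~1 does not even use the self-adjointness of $h$, so you have in fact proved the sharper statement that a weak q.c.m.\ structure satisfying \eqref{E-nS-10} has $Q={\rm id}$, and (with $h$ self-adjoint) is a genuine contact metric structure with $d\eta=\Phi$; this is stronger than the stated conclusion and shows the proposition's ``weak'' generality collapses under hypothesis \eqref{E-nS-10} --- an observation the paper does not make. Your second step is then the routine computation from \eqref{E-Qnabla}, \eqref{E-31A} and the self-adjointness of $h$, and your derivation of $d\Phi=d(d\eta)=0$ and of the contact condition is cleaner than the paper's (which asserts $d\Phi=0$ from the non-closed-looking identity $d\eta(X+\frac12\widetilde QX,Y)=\Phi(X,Y)$). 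What each approach buys: the paper's argument is more uniform with the rest of its weak-structure machinery, while yours isolates a structural rigidity fact that makes the remainder elementary.
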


\begin{proof}
According to Lemma~\ref{P-2.6}, conditions \eqref{E-C2}--\eqref{E-C4} are true.
%For a weak a.c.m. structure,
%$(f,Q,\xi,\eta,g)$,
From \eqref{E-C2}, we obtain
\begin{align}\label{Eq-3.3}
  (\nabla_X\,\eta)(QY) - (\nabla_Y\,\eta)(QX)
  + (\nabla_{fX}\,\eta)(fY) - (\nabla_{fY}\,\eta)(fX)
  = - 4\,g(fX,Y) .
\end{align}
 {According} to \eqref{E-C3} and Proposition~\ref{P-2.3}, \eqref{Eq-2.14} is true;
thus, ${\cal N}^{(2)}\equiv0$ holds by symmetry of $h$.
Using \eqref{Eq-2.9}, we~obtain
\begin{align}\label{Eq-3.4}
  (\nabla_{fX}\,\eta)(Y) - (\nabla_{Y}\,\eta)(fX)
  -(\nabla_{fY}\,\eta)(X) + (\nabla_{X}\,\eta)(fY) = 0 .
\end{align}
{Replacing} $X$ with $fX$ in \eqref{Eq-3.4} and using
$\nabla_{\xi}\,\eta=0$ (since $\nabla_\xi\,\xi=0$;
see \eqref{E-C4}) and $(\nabla_{Y}\,\eta)(\xi)=0$, we~obtain
\begin{align}\label{Eq-3.5}
   %\eta(X)(\nabla_{\xi}\,\eta)Y
   (\nabla_{Y}\,\eta)(QX) -(\nabla_{QX}\,\eta)(Y)
  %-\eta(X)(\nabla_{Y}\,\eta)\xi
  - (\nabla_{fY}\,\eta)(fX) + (\nabla_{fX}\,\eta)(fY) = 0 .
\end{align}
 {Subtracting} \eqref{Eq-3.5} from \eqref{Eq-3.3}
%and using $(\nabla_{X}\,\eta)(\widetilde QY)=0$,
gives
\begin{align*}
 2(\nabla_{X}\,\eta)(Y) - 2(\nabla_{Y}\,\eta)(X)
 +(\nabla_{X}\,\eta)(\widetilde QY)
 -2(\nabla_{Y}\,\eta)(\widetilde QX)
 +(\nabla_{\widetilde QX}\,\eta)(Y) = 4\,\Phi(X,Y).
\end{align*}
 {Using} \eqref{E-nS-10}, we find
$(\nabla_{X}\,\eta)(\widetilde QY)=(\nabla_{Y}\,\eta)(\widetilde QX)=0$;
therefore,
\begin{align*}
 (\nabla_{X}\,\eta)(Y) - (\nabla_{Y}\,\eta)(X)
 +(1/2)\,(\nabla_{\widetilde QX}\,\eta)(Y) = 2\,\Phi(X,Y) .
\end{align*}
 {Hence,} using
$(\nabla_{X}\,\eta)(Y) - (\nabla_{Y}\,\eta)(X)=2d\eta(X,Y)$
and $(\nabla_{\widetilde QX}\,\eta)(Y)=2d\eta(\widetilde QX,Y)$,
we obtain
\begin{align*}%\label{E-deta-Q-Phi}
 d\eta(X+\frac12\,\widetilde QX, Y) = \Phi(X,Y).
\end{align*}
 {In} particular, $d\Phi=0$.
In terms of the $f$-basis, we obtain
$\frac12(\lambda_i+1)\,d\eta(e_i,Y)=\Phi(e_i,Y)$
and
$\Phi(e_i,e_j)=0$, $\Phi(e_i,fe_j)=-\lambda_i \delta_{ij}$,
where $\lambda_i>0$.
%Therefore, $M$ is a weak contact metric manifold.
Therefore,
\[
 \eta\wedge (d\eta)^n(\xi, e_1, fe_1,\ldots, e_n, fe_n)
 =(d\eta)^n(e_1, fe_1,\ldots, e_n, fe_n) \ne0,
\]
and $\eta$ is a contact form.
\end{proof}

%Characterizations of
%\section{Weak quasi contact metric manifolds}

%The following result generalizes Theorem~4.2 in \cite{KPSP-2014}.

%It is well-known
%(for example,  \cite[Theorem~6.3]{blair2010riemannian}) that an almost %contact metric structure is Sasakian if and only if
%\begin{equation}\label{E-S1}
% (\nabla_X f)Y = g(X,Y)\,\xi - \eta(Y)X.
%\end{equation}

\begin{Theorem}\label{T-07}
If a weak q.c.m. manifold $M(f,Q,\xi,\eta,g)$ satisfies the condition \eqref{E-S1},
%and \eqref{E-CS},
then $Q={\rm id}$ and
%A weak quasi contact metric structure
$M(f,\xi,\eta,g)$ is a Sasakian manifold.
%if and only if the condition  holds.
\end{Theorem}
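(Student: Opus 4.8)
The plan is to substitute the Sasakian identity \eqref{E-S1} into the weak q.c.m.\ defining equation \eqref{E-quasi-01} and exploit the consequences of Lemma~\ref{P-2.6}. First, I would observe that \eqref{E-S1} is exactly the ``nearly-Sasakian-type'' identity in both variables with $Q$ replaced by ${\rm id}$, so $(\nabla_X f)Y = g(X,Y)\xi - \eta(Y)X$ holds. Applying this with $X$ replaced by $fX$ and $Y$ replaced by $fY$ gives $(\nabla_{fX}f)fY = g(fX,fY)\xi - \eta(fY)(fX) = g(fX,fY)\xi$, since $\eta\circ f = 0$ by \eqref{E-wquasi-1}. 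Adding the two expressions yields
\[
 (\nabla_X f)Y + (\nabla_{fX}f)fY = \big(g(X,Y) + g(fX,fY)\big)\xi - \eta(Y)X.
\]
Using the compatibility relation $g(fX,fY) = g(X,QY) - \eta(X)\eta(Y)$ from \eqref{E-nS-2.2}, the bracket becomes $g(X,Y) + g(X,QY) - \eta(X)\eta(Y)$. On the other hand, \eqref{E-quasi-01} says this same sum equals $2g(X,Y)\xi - \eta(Y)(X + hX + \eta(X)\xi)$.

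Next I would equate the two expressions. Comparing the coefficients of $\xi$ (i.e.\ applying $\eta$, or equivalently taking inner products with $\xi$ of the $\xi$-components) and separately the components in $\ker\eta$, I get two relations. The $\xi$-part gives $g(X,Y) + g(X,QY) - \eta(X)\eta(Y) = 2g(X,Y) - \eta(X)\eta(Y)$, that is $g(X,QY) = g(X,Y)$ for all $X,Y$, hence $Q = {\rm id}$; equivalently $\widetilde Q = 0$. The non-$\xi$ part gives $-\eta(Y)X = -\eta(Y)(X + hX)$, forcing $\eta(Y)hX = 0$ for all $X,Y$; choosing $Y = \xi$ gives $hX = 0$, so $h = 0$. (Alternatively, once $Q = {\rm id}$ one may invoke \eqref{E-Qnabla}, which reads $\nabla\xi = -f - fh$, together with a direct computation of $\nabla\xi$ from \eqref{E-S1} with $Y = \xi$, namely $(\nabla_X f)\xi = -\eta(\xi)X + \ldots$, to recover $h = 0$ and $\nabla\xi = -f$.)

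With $Q = {\rm id}$ established, the structure $(f,\xi,\eta,g)$ is an ordinary a.c.m.\ structure satisfying the classical Sasakian identity \eqref{E-S1}, which by \cite[Theorem~6.3]{blair2010riemannian} characterizes Sasakian manifolds. Hence $M(f,\xi,\eta,g)$ is Sasakian, completing the proof. The main obstacle I anticipate is purely bookkeeping: carefully separating the $\xi$-component from the $\ker\eta$-component in the identity obtained after the substitution, and making sure the cancellations involving $\eta\circ f = 0$, $f\xi = 0$, $h\xi = 0$ and $\eta\circ Q = \eta$ are applied correctly. Once the algebraic identity is split cleanly, both conclusions $Q = {\rm id}$ and $h = 0$ drop out immediately, and the reduction to the classical characterization is routine.
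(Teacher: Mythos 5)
Your proposal is correct and follows essentially the same route as the paper: substitute \eqref{E-S1} into \eqref{E-quasi-01}, use $g(fX,fY)=g(X,QY)-\eta(X)\eta(Y)$, and split off the $\xi$-component to get $\widetilde Q=0$ and $h=0$, then invoke \cite[Theorem~6.3]{blair2010riemannian}. The only (immaterial) difference is that the paper first passes through the intermediate observation that the resulting structure is a contact metric one via \cite[Theorem~2.2]{KPSP-2020} before citing Blair's characterization.
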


\begin{proof}
Using \eqref{E-S1} in \eqref{E-quasi-01}, we obtain the following for any $X,Y\in TM$:
\begin{align*}
 g(\widetilde Q X,Y)\,\xi + \eta(Y) hX =0.
\end{align*}
 {Since} $hX\perp\xi$, we obtain $\widetilde Q=0$ and $h=0$; therefore, $M(f,\xi,\eta,g)$ is a quasi-contact metric manifold.
According to \cite[Theorem~2.2]{KPSP-2020} ($h$ is self-adjoint),  $M(f,\xi,\eta,g)$ is a contact metric manifold.
According to the~conditions and \cite[Theorem~6.3]{blair2010riemannian}, $M(f,\xi,\eta,g)$ is a Sasakian manifold.
\end{proof}

On a contact metric manifold $M^{2n+1}$, we have $Ric (\xi,\xi)=2n - \tr h^2$;
see \cite[Corollary~7.1]{blair2010riemannian}.
The~following theorem generalizes this property
and generalizes \cite[Theorem~A]{KPSP-2020}.

\begin{Theorem}\label{T-04}
Let $M(f,Q,\xi,\eta,g)$ be a weak q.c.m. manifold such that
%(i)
$K(\xi,X) + K(\xi, fX)\ge0$ for all nonzero $X\perp\xi$.
%(ii) the contact distribution $\ker\eta$ is integrable
%and \eqref{E-nS-10} is true.
Then,
\begin{align}\label{E-lambda-1}
 %&  (i)\quad
 \max_{1\le i\le n}\{\lambda_i\}\cdot Ric (\xi,\xi)\ge n -\tr h^2 +\frac1{4n}(\tr Q - 1)^2 .
 %, \\
 %\label{E-lambda-2}
 %& (ii)\quad \lambda_+ Ric (\xi,\xi)\ge 2n -\lambda_+\tr h^2
 %+ 2\tr\widetilde Q .
\end{align}
 {If} $\tr\widetilde Q=0$ and the equality in \eqref{E-lambda-1} holds,
then $\widetilde Q=0$, $\lambda_i=1\ (1\le i\le n)$
and $Ric (\xi,\xi)=2n-\tr h^2$;
%is valid;
%a quasi contact metric manifold
moreover, if $\xi$ is a Killing vector field, then
$M(f,\xi,\eta,g)$ is
%. In the case
%(i)
%$M$ is
a K-contact manifold.
%In the case
%(ii)
%$M$ is
%a cosymplectic manifold.
\end{Theorem}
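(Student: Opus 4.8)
The plan is to convert the curvature identity \eqref{E-Lie-Q2} of Proposition~\ref{P-7.1} into a scalar identity for $Ric(\xi,\xi)$ by evaluating it on an $f$-basis, and then to extract \eqref{E-lambda-1} from two elementary inequalities. Fix a point $x\in M$ and an $f$-basis $\{\xi,e_1,fe_1,\dots,e_n,fe_n\}$ at $x$ with $Qe_i=\lambda_ie_i$, $|e_i|=1$, $|fe_i|^2=\lambda_i$, and write $a_i:=K(\xi,e_i)+K(\xi,fe_i)$. First I would put $X=e_i$ in \eqref{E-Lie-Q2} and pair the resulting vector with $e_i$; using $f^2e_i=-\lambda_ie_i$, the commutations $h^2Q=Qh^2$ and $h^2f=fh^2$ coming from \eqref{E-31A}--\eqref{E-31B}, the skew-symmetry of $f$ and self-adjointness of $Q$, and $R_{X,Y}=-R_{Y,X}$ together with the definition of sectional curvature, this should yield $\lambda_i\,a_i=\lambda_i^2+1-2\,g(h^2e_i,e_i)$. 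Summing over $i$, using $\tr h^2=2\sum_i g(h^2e_i,e_i)$ (again from $h^2f=fh^2$, $h^2Q=Qh^2$, and $\eta\circ h=0$, $h\xi=0$) and $Ric(\xi,\xi)=\sum_i a_i$ (the $\xi$-line contributes nothing, the vectors $fe_i/\sqrt{\lambda_i}$ being the unit ones), I obtain the identity $\sum_i\lambda_i a_i=\sum_i\lambda_i^2+n-\tr h^2$, valid on every weak q.c.m.\ manifold.

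Next, the curvature hypothesis gives $a_i\ge0$ (take $X=e_i\ne0$, $e_i\perp\xi$), hence $\sum_i\lambda_i a_i\le(\max_j\lambda_j)\sum_i a_i=(\max_j\lambda_j)\,Ric(\xi,\xi)$. Since the $\xi$-line contributes $g(Q\xi,\xi)=1$ to $\tr Q$, we have $\tr Q-1=2\sum_i\lambda_i$, so the Cauchy--Schwarz inequality gives $\sum_i\lambda_i^2\ge\tfrac1n\big(\sum_i\lambda_i\big)^2=\tfrac1{4n}(\tr Q-1)^2$. Chaining these with the identity of the previous step yields $(\max_j\lambda_j)\,Ric(\xi,\xi)\ge\sum_i\lambda_i^2+n-\tr h^2\ge n-\tr h^2+\tfrac1{4n}(\tr Q-1)^2$, which is \eqref{E-lambda-1}.

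For the equality case I would observe that equality in \eqref{E-lambda-1} forces equality in both of the above inequalities; equality in Cauchy--Schwarz forces all $\lambda_i$ equal, and together with $\tr\widetilde Q=0$ (equivalently $\sum_i\lambda_i=n$) this gives $\lambda_i=1$ for all $i$, i.e.\ $Q=\mathrm{id}$ and $\widetilde Q=0$. Feeding $\lambda_i=1$ back into $\sum_i\lambda_i a_i=\sum_i\lambda_i^2+n-\tr h^2$ gives $Ric(\xi,\xi)=\sum_i a_i=2n-\tr h^2$.

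Finally, with $\widetilde Q=0$ the structure $(f,\xi,\eta,g)$ is an ordinary quasi-contact metric structure; if moreover $\xi$ is a Killing vector field, then by Proposition~\ref{P-06} the tensor $h$ is skew-symmetric, and I would then invoke \cite[Theorem~A]{KPSP-2020} (the classical statement being generalized here) to conclude $h=0$ --- alternatively, one can combine $\nabla_\xi\,h=0$ (which follows from $\nabla_\xi(\nabla\,\xi)=0$ for a Killing $\xi$ together with \eqref{E-C3}) with \eqref{E-Lie-Q1} to obtain $R_{X,\xi}\,\xi=(\mathrm{id}-h^2)X$ on $\ker\eta$ and push this through the defining relation \eqref{E-quasi-01}. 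Once $h=0$, \cite[Theorem~3.2]{KPSP-2014} shows the structure is contact metric, and, $\xi$ being Killing, K-contact. The main obstacle is precisely this last step --- upgrading ``$h$ skew-symmetric'' to ``$h=0$'' on a quasi-contact metric manifold with Killing characteristic vector field; everything before it is bookkeeping with the $f$-basis and the already-established Lemma~\ref{P-2.6} and Propositions~\ref{P-7.1} and~\ref{P-06}.
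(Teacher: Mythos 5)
Your proposal is correct and follows essentially the same route as the paper: evaluate \eqref{E-Lie-Q2} on an $f$-basis to get $\lambda_i\big(K(\xi,e_i)+K(\xi,fe_i)\big)=\lambda_i^2+1-2\,g(h^2e_i,e_i)$, sum to obtain the identity \eqref{E-K2}, then combine $a_i\ge0$ with $\tr Q-1=2\sum_i\lambda_i$ and Cauchy--Schwarz, and finish the equality/Killing case exactly as the paper does by reducing to $\widetilde Q=0$ and invoking \cite[Theorem~A]{KPSP-2020}. Your explicit justification of $\sum_i\lambda_i a_i\le(\max_j\lambda_j)\,Ric(\xi,\xi)$ and of $\tr h^2=2\sum_i g(h^2e_i,e_i)$ via $h^2f=fh^2$ only makes explicit what the paper leaves implicit.
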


\begin{proof}
From \eqref{E-Lie-Q2} with $X=e_i$ and then $X=fe_i$, we can obtain
formulas with sectional curvature,
\begin{align*}
 & \lambda_i \big( K(\xi,e_i) + K(\xi, fe_i) \big)
 =  \lambda_i^2 + 1 -2 g(h^2 e_i, e_i) ,\\
 & \lambda_i\big(K(\xi,fe_i) + K(\xi, e_i) \big)
 = \lambda_i^2 + 1 - 2 g(h^2 fe_i/\|fe_i\|, fe_i/\|fe_i\|) \big) ,
 \end{align*}
using an $f$-basis $\{\xi,e_1,\ldots,e_n, fe_1,\ldots, fe_n\}$. From the above, we obtain
\begin{align}\label{E-K2}
 \sum\nolimits_{i=1}^n \lambda_i \big( K(\xi,e_i) + K(\xi, fe_i)
 \big) = n  - \tr h^2 + \sum\nolimits_{i=1}^n \lambda_i^2.
\end{align}
{Using} the equality $\tr Q=1+2\sum_{i=1}^n \lambda_i$ and the well-known inequality
$\sum_{i=1}^n \lambda_i^2 \ge \frac1n(\sum_{i=1}^n \lambda_i)^2$,
%=\frac1{4n}(\tr Q)^2$,
%= \tr Q = 2n + \tr\widetilde Q$,
%the conditions and Proposition~\ref{P-06},
from \eqref{E-K2}, we obtain \eqref{E-lambda-1}.
%and \eqref{E-lambda-2}.
If the equality in \eqref{E-lambda-1}
%(i) or (ii)
is true, then
$\lambda_1=\ldots=\lambda_n$.
Note that $\tr Q =2n+1+\tr\widetilde Q$.
According to the condition $\tr\widetilde Q=0$, we can obtain
$\lambda_i=1$ and $\widetilde Q=0$;
thus, $M(f,\xi,\eta,g)$ is a q.c.m. manifold
and $Ric (\xi,\xi)=2n-\tr h^2$.
%In the case (i),
If $\xi$ is a Killing vector field, then according to \cite[Theorem~A]{KPSP-2020}, $M(f,\xi,\eta,g)$ is a K-contact metric manifold.
%In the case (ii), we have $d\eta=0$ (since $\ker\eta$ is %integrable) and $d\Phi=0$ (by Proposition~\ref{P-05});
%hence, $M(f,\xi,\eta,g)$ is a cosymplectic manifold.
\end{proof}

%The following result generalizes Theorem~A in \cite{KPSP-2020}.

%\begin{Theorem}%\label{T-06}
%If a weak quasi contact metric manifold $M(f,Q,\xi,\eta,g)$
%with a Killing vector field $\xi$
%satisfies condition \eqref{E-nS-10},
%%A weak quasi contact metric structure with
%%then it is a weak K-contact manifold.
%%Sasakian if and only if the condition \eqref{E-CS} holds.
%\end{Theorem}
%\begin{proof} \end{proof}

It is well known that a K-contact structure satisfies the following condition:
\begin{equation}\label{E-S3}
 R_{X,\xi}\,\xi = -X - \eta(X)\,\xi,
\end{equation}
thus, the sectional curvature of all planes containing $\xi$ is equal
to 1.

The following result complements \cite[Theorem~7.2]{blair2010riemannian}
and \cite[Theorem~A]{KPSP-2020} on K-contact mani\-folds.

\begin{Theorem}\label{T-06}
Let a weak q.c.m. manifold $M(f,Q,\xi,\eta,g)$ satisfy
%conditions \eqref{E-nS-10} $\tr h^2\le0$ and
\eqref{E-S3}.
If $\xi$ is a Killing vector field,
then, $Q={\rm id}$ and $M(f,\xi,\eta,g)$  is a K-contact manifold.
\end{Theorem}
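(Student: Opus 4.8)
The plan is to exploit Proposition~\ref{P-7.1}, in particular \eqref{E-Lie-Q2}, together with the hypothesis \eqref{E-S3}, to show first that the eigenvalues $\lambda_i$ of $Q$ all equal $1$, which forces $Q={\rm id}$, and then to invoke the classical K-contact criterion. First I would substitute \eqref{E-S3} into \eqref{E-Lie-Q2}. Since \eqref{E-S3} gives $R_{X,\xi}\,\xi = -X-\eta(X)\xi$, we have $R_{\xi,X}\,\xi = X+\eta(X)\xi$ and $R_{\xi,fX}\,\xi = fX$ (using $\eta\circ f=0$); hence the left-hand side of \eqref{E-Lie-Q2} becomes $Q(X+\eta(X)\xi) - f(fX) = QX + \eta(X)\xi - f^2 X = QX + \eta(X)\xi + QX - \eta(X)\xi = 2QX$ on $\ker\eta$ (and $0$ on $\xi$). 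The right-hand side is $2h^2X + (Q+Q^{-1})f^2X = 2h^2 X - (Q+Q^{-1})QX + \eta(X)(\cdots)$, so restricting to $X\in\ker\eta$ we obtain
\begin{align*}
 2QX = 2h^2 X - (Q+Q^{-1})QX,
\end{align*}
that is, $h^2 X = QX + \tfrac12(Q^2 + {\rm id})X = \tfrac12(Q+{\rm id})^2 X$ on $\ker\eta$. Thus $h^2 = \tfrac12(Q+{\rm id})^2$ on the contact distribution.

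Next I would bring in the Killing hypothesis. By Proposition~\ref{P-06}, if $\xi$ is Killing then $h$ is skew-symmetric, so $h^2$ is nonpositive definite on $\ker\eta$. But we have just shown $h^2 = \tfrac12(Q+{\rm id})^2$, and since $Q$ is positive definite, $\tfrac12(Q+{\rm id})^2$ is positive definite on $\ker\eta$. On the other hand, $h\xi=0$. A nonzero vector space cannot carry an operator that is simultaneously nonpositive and positive definite, so $\ker\eta$ would have to be trivial — contradiction unless we have made an error; the resolution is that the correct reading forces $h^2=0$ and $(Q+{\rm id})^2 = 0$ to be reconciled, which is impossible, \emph{unless} one rechecks the sign: in fact on a weak contact metric manifold with $\xi$ Killing one expects $h=0$. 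Let me instead argue directly: combining $h$ skew-symmetric (so $h^2\le 0$) with $h^2=\tfrac12(Q+\mathrm{id})^2$ on $\ker\eta$ and evaluating $g(h^2 e_i,e_i)$ in an $f$-basis gives $-\|he_i\|^2 = \tfrac12(\lambda_i+1)^2\lambda_i > 0$, which is absurd. Hence the hypotheses can only be consistent if in fact $h^2 X$ picks up the $Q^{-1}$ correction with the opposite relative sign, i.e. the honest computation yields $h^2 = \tfrac12({\rm id}-Q)^2 \cdot(\text{something})$; the point is that forcing $g(h^2e_i,e_i)\le 0$ together with the curvature identity pins down $\lambda_i$. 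I would carry out the $f$-basis computation carefully, obtaining a scalar equation of the form $-\|he_i\|^2 = \varphi(\lambda_i)$ where $\varphi(\lambda)\ge 0$ with equality iff $\lambda=1$; this forces every $\lambda_i=1$, hence $Q={\rm id}$ and $h=0$.

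Once $Q={\rm id}$ is established, $M(f,\xi,\eta,g)$ is an ordinary q.c.m. manifold, and $h=0$ together with $h$ self-adjoint (it is, since $h=0$) lets me invoke \cite[Theorem~3.2]{KPSP-2014} to conclude $M$ is a contact metric manifold, exactly as in the proof of Theorem~\ref{T-05}. Finally, since $\xi$ is assumed Killing, $M(f,\xi,\eta,g)$ is by definition a K-contact manifold; alternatively one can note \eqref{E-Qnabla} now reads $\nabla\xi = -f-fh = -f$, which is \eqref{E-30}, and apply Theorem~\ref{T-05} directly.

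The main obstacle I anticipate is the bookkeeping in the first step: correctly expanding \eqref{E-Lie-Q2} under \eqref{E-S3} — in particular tracking the $\eta(X)\xi$ terms and the $Q^{-1}f^2$ term — and then extracting from the resulting operator identity on $\ker\eta$ a \emph{scalar} inequality in each eigenvalue $\lambda_i$ whose only solution compatible with $h$ skew-symmetric (Proposition~\ref{P-06}) is $\lambda_i=1$. Getting the signs right so that the eigenvalue equation genuinely forces $\lambda_i=1$ (rather than merely constraining the $\lambda_i$) is the crux; everything after $Q={\rm id}$ is a routine appeal to Theorem~\ref{T-05} or to \cite[Theorem~3.2]{KPSP-2014} and the definition of K-contact.
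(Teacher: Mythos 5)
Your overall strategy is the paper's: substitute \eqref{E-S3} into \eqref{E-Lie-Q2}, extract an operator identity relating $h^2$ to $Q$ on $\ker\eta$, use Proposition~\ref{P-06} (Killing $\Rightarrow$ $h$ skew-symmetric, $h^2\le0$) to force $\widetilde Q=0$ and $h=0$, and then conclude K-contactness (the paper quotes \cite[Theorem~A]{KPSP-2020} for q.c.m.\ manifolds with Killing $\xi$; your alternative route via $\nabla\xi=-f$ and Theorem~\ref{T-05} is equally valid). But there is a genuine gap at the crux: you never actually establish the key identity. Taking \eqref{E-S3} at face value you compute $2h^2=(Q+{\rm id})^2>0$ on $\ker\eta$, which contradicts $h^2\le0$ outright and would make the theorem vacuous; you then explicitly punt, writing that ``the honest computation yields $h^2=\tfrac12({\rm id}-Q)^2\cdot(\text{something})$'' and that you ``would carry out the $f$-basis computation carefully.'' Conjecturing that a corrected computation gives $\varphi(\lambda)\ge0$ with equality iff $\lambda=1$ is not the same as deriving it, and since you flag this as the step where everything could go wrong, the proof is not complete.

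The resolution of the sign is as follows. With the paper's convention $R_{X,Y}Z=\nabla_X\nabla_YZ-\nabla_Y\nabla_XZ-\nabla_{[X,Y]}Z$, the K-contact identity is $R_{X,\xi}\,\xi=X-\eta(X)\xi$, i.e.\ $R_{\xi,X}\,\xi=-X$ for $X\perp\xi$ (check it directly from $\nabla\xi=-f$, $\nabla_\xi\xi=0$); the displayed sign in \eqref{E-S3} is inconsistent with the paper's own curvature convention and with the assertion that it encodes sectional curvature $1$, and the intended reading is $R_{\xi,X}\,\xi=-X$, $R_{\xi,fX}\,\xi=-fX$ for $X\perp\xi$. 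With that input the left side of \eqref{E-Lie-Q2} is $-QX+f^2X=-2QX$, so
\begin{align*}
 2h^2X=-2QX-(Q+Q^{-1})f^2X=-2QX+Q^2X+X=(Q-{\rm id})^2X=\widetilde Q^2X ,
\end{align*}
whence $2h^2=\widetilde Q^2\ge0$ on $\ker\eta$; combined with $h^2\le0$ this gives $\widetilde Q^2=0$ and, since $\widetilde Q$ is self-adjoint, $\widetilde Q=0$ and $h=0$. That is exactly the scalar statement you were hoping for ($-2\|he_i\|^2\lambda_i^{-1}\cdot\lambda_i$ versus $(\lambda_i-1)^2\ge0$), but it has to be derived, not posited; once it is, the rest of your argument goes through.
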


\begin{proof}
According to \eqref{E-S3}, we obtain $K_{\xi,X}=1$ for nonzero vectors $X\perp\xi$.
%%%%%
Using this in \eqref{E-Lie-Q2}, we obtain
\begin{align*}
 Q(-X) - f(-fX) = 2h^2X +(Q+Q^{-1}) f^2X.
\end{align*}
 {Using} \eqref{E-nS-2.2} and $Q={\rm id}+\widetilde Q$, we simplify the above equation with $X\perp\xi$ to the following:
\begin{align*}
  2h^2X & = f^2X -QX -(Q+Q^{-1}) f^2X \\
 & = -QX +\eta(X)\,\xi -QX +(Q+Q^{-1})QX - \eta(X)(Q+Q^{-1})\xi \\
 & = -2QX + Q^2X +X
 %- \eta(X)\,\xi
 = -2\widetilde QX + Q^2X -X
 %- \eta(X)\,\xi
 \\
 & = -2\widetilde QX +\widetilde Q^2X +2\widetilde QX
 %-\eta(X)\,\xi
  = \widetilde Q^2X.
\end{align*}
  {Therefore,} $2h^2 = \widetilde Q^2\ge0$, and consequently, $\tr h^2\ge0$.
 On the other hand, by Proposition~\ref{P-06} and the conditions,
 $h$ is skew-symmetric and $h^2\le0$. Therefore, $\widetilde Q=0$,
%Set $\tilde\lambda_i=\lambda_i-1$. From \eqref{E-K2}, we obtain
%\begin{align}\label{E-K2b}\notag
% & 0\le\tr h^2 = n + \sum\nolimits_{i=1}^n \lambda_i^2
% - 2\sum\nolimits_{i=1}^n \lambda_i \\
%& = n + \sum\nolimits_{i=1}^n (1+\tilde\lambda_i)^2
% - 2\sum\nolimits_{i=1}^n (1+\tilde\lambda_i)a_i
%=-\sum\nolimits_{i=1}^n \tilde\lambda_i^2 \le0 .
%\end{align}
% {From} \eqref{E-K2b}, we obtain $\tilde\lambda_i=0$ for all $i$
%(thus, $Q={\rm id}$) and $\tr h^2=0$.
%Since $h^2$ is self-adjoint and nonnegative definite,
and we conclude that
%$h=0$ and
$M(f,\xi,\eta,g)$
is a q.c.m. manifold
with a Killing vector field $\xi$.
%condition \eqref{E-S3}.
Hence, according to \cite[Theorem~A]{KPSP-2020},
%(\cite{blair2010riemannian} Theorem~7.2),
$M(f,\xi,\eta,g)$  is a K-contact manifold.
\end{proof}

It is well known that a contact metric structure is Sasakian if and only if
\begin{equation}\label{E-S2}
 R_{X,Y}\,\xi = \eta(Y)X - \eta(X)Y.
\end{equation}

The following result complements \cite[Proposition~7.6]{blair2010riemannian} on contact metric manifolds
and \cite[Theorem~4.3]{CKPS-2016} on q.c.m. manifolds.

\begin{Theorem}\label{T-08}
Let a weak q.c.m. manifold $M(f,Q,\xi,\eta,g)$ satisfy
conditions
%\eqref{E-nS-10}
$\tr h^2\le0$ and \eqref{E-S2}.
%If $\xi$ is a Killing vector field,
Then, $Q={\rm id}$ and $M(f,\xi,\eta,g)$  is a Sasakian manifold.
\end{Theorem}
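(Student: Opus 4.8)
The plan is to feed the curvature hypothesis \eqref{E-S2} into the identity \eqref{E-Lie-Q2} of Proposition~\ref{P-7.1}, extract an algebraic relation between $h^2$ and $\widetilde Q$, and then combine it with the trace hypothesis $\tr h^2\le 0$ to force $\widetilde Q=0$; once the structure is classical, invoke the known q.c.m.\ characterization of Sasakian manifolds. Concretely, from \eqref{E-S2} with $Y=\xi$ we get $R_{X,\xi}\,\xi = X-\eta(X)\,\xi$ for all $X$, so in particular $R_{\,\xi,X}\,\xi = -X+\eta(X)\,\xi = f^2 X - \widetilde Q X$ for $X\perp\xi$ (using $f^2 = -Q+\eta\otimes\xi$). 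Substituting $R_{\,\xi,X}\,\xi$ and $R_{\,\xi,fX}\,\xi$ into the left-hand side of \eqref{E-Lie-Q2}, and simplifying with $Q=\mathrm{id}+\widetilde Q$, $[Q,f]=0$ and $f^2 = -Q$ on $\ker\eta$, I expect the same kind of collapse that occurs in the proof of Theorem~\ref{T-06}, yielding $2h^2 X = \widetilde Q^2 X$ for $X\perp\xi$, hence $2h^2 = \widetilde Q^2 \ge 0$ and $\tr h^2 \ge 0$.

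Next I would play this against the hypothesis $\tr h^2 \le 0$: together they give $\tr h^2 = 0$, and since $h^2 = \tfrac12\widetilde Q^2$ is a self-adjoint nonnegative operator with vanishing trace, it is identically zero; therefore $\widetilde Q^2 = 0$, and as $\widetilde Q$ is self-adjoint this forces $\widetilde Q = 0$, i.e.\ $Q=\mathrm{id}$. At this point $M(f,\xi,\eta,g)$ is an ordinary quasi-contact metric manifold, and moreover $h^2 = 0$ with $h$ self-adjoint (by \eqref{E-31A}--\eqref{E-31B} the relevant symmetry properties carry over, or one argues directly that $h^2=0$ self-adjoint gives $h=0$), so $h=0$.

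With $h=0$ and the classical q.c.m.\ structure in hand, I would apply \cite[Theorem~3.2]{KPSP-2014} (as in the proof of Theorem~\ref{T-05}) to conclude that $M(f,\xi,\eta,g)$ is a contact metric manifold; then condition \eqref{E-S2} is exactly the classical criterion for a contact metric structure to be Sasakian, so by \cite[Proposition~7.6]{blair2010riemannian} the manifold is Sasakian. The main obstacle I anticipate is the bookkeeping in the substitution into \eqref{E-Lie-Q2}: one must be careful that $R_{\,\xi,fX}\,\xi$ is computed by applying \eqref{E-S2} to the vector $fX$ (which lies in $\ker\eta$), that the $Q$-factors multiplying $R_{\,\xi,X}\,\xi$ versus the $f$-factor multiplying $R_{\,\xi,fX}\,\xi$ are handled with the commutation $[Q,f]=0$, and that the $(Q+Q^{-1})f^2X$ term is expanded via $f^2X=-QX$ so that the $Q^{-1}$ disappears — exactly the computation already carried out in Theorem~\ref{T-06}, which suggests it goes through cleanly and in fact reduces to that case. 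A minor point to check is that the argument $h^2\ge 0$ from $2h^2=\widetilde Q^2$ does not require knowing a priori that $\xi$ is Killing; here $\tr h^2\le 0$ is assumed outright rather than deduced, so no appeal to Proposition~\ref{P-06} is needed.
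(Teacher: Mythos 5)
Your reduction is the same as the paper's: substitute \eqref{E-S2} into \eqref{E-Lie-Q2}, repeat the algebra of Theorem~\ref{T-06} to obtain $2h^2=\widetilde Q^2\ge0$, and then use $\tr h^2\le0$ to force $\tr\widetilde Q^2=0$ and hence $\widetilde Q=0$ (since $\widetilde Q$ is self-adjoint). That part is correct, modulo a harmless sign slip: for $X\perp\xi$ one has $-X+\eta(X)\xi=f^2X+\widetilde QX$, not $f^2X-\widetilde QX$; this does not affect the computation since you defer to the one already done in Theorem~\ref{T-06}.

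The ending is where you diverge from the paper, and where there is a genuine gap. You want to pass from $h^2=0$ to $h=0$. A nilpotent operator with square zero need not vanish unless it is normal; the fact that $h^2=0$ is (trivially) self-adjoint says nothing about $h$ itself, and \eqref{E-31A}--\eqref{E-31B} only give anticommutation with $f$ and commutation with $Q$ --- they do not make $h$ self-adjoint or skew-symmetric. Indeed, by Proposition~\ref{P-2.3} self-adjointness of $h$ is equivalent to ${\cal N}^{(2)}\equiv0$, and by Proposition~\ref{P-06} skew-symmetry requires $\xi$ to be Killing; neither is among the hypotheses of Theorem~\ref{T-08}. So the chain ``$h^2=0$, hence $h=0$, hence contact metric by \cite[Theorem~3.2]{KPSP-2014}, hence Sasakian by \cite[Proposition~7.6]{blair2010riemannian}'' is not justified at its first link. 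The paper sidesteps this entirely: once $\widetilde Q=0$, the manifold is an ordinary q.c.m.\ manifold satisfying \eqref{E-S2}, and \cite[Theorem~4.3]{CKPS-2016} applies directly to conclude that it is Sasakian, with no need to first establish $h=0$ or that the structure is contact metric. Replacing your last paragraph with that citation closes the gap.
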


\begin{proof}
According to \eqref{E-S2}, we can obtain $R_{\xi,X}\,\xi=-X$ for $X\perp\xi$.
As in the proof of Theorem~\ref{T-06}, we conclude that
$2h^2=\widetilde Q^2\ge0$. From this and the conditions, we obtain $\tr \widetilde Q^2=0$;
hence, $\widetilde Q=0$ and $M(f,\xi,\eta,g)$ is a q.c.m. manifold with condition \eqref{E-S2}.
Hence, according to \cite[Theorem~4.3]{CKPS-2016},
%(\cite{blair2010riemannian} Proposition~7.6),
$M(f,\xi,\eta,g)$  is a Sasakian manifold.
\end{proof}

%\begin{Corollary}
%(i)~If a q.c.m. manifold satisfies condition \eqref{E-S3}, then
%it is a K-contact manifold.
%(ii)~If a q.c.m. manifold satisfies condition \eqref{E-S2}, then
%it is a Sasakian manifold.
%\end{Corollary}

%%%%%%%%%%%%%%%%%%%%%%%%%%%%%%%%%%%%%%%%%%
%\section{Discussion}

%Authors should discuss the results and how they can be %interpreted from the perspective of previous studies and of %the working hypotheses. The findings and their implications %should be discussed in the broadest context possible. Future %research directions may also be highlighted.

%%%%%%%%%%%%%%%%%%%%%%%%%%%%%%%%%%%%%%%%%%
\section{Conclusions}

This paper contains substantial new mathematics that successfully
extends important concepts and theorems about contact Riemannian manifolds for the case of weak q.c.m. manifolds and provides new tools for studying K-contact and Sasakian structures.

\baselineskip=12.8pt

\end{document}